\begin{document}

%
\begin{frontmatter}

%

\title{$k$-colored kernels}%

\hspace{0.38in}
\author{Hortensia Galena-S\'{a}nchez $^{\mathrm{a}}$, Bernardo Llano $^{\mathrm{b,1}}$, and}
\hspace{0.38in}
\author{Juan Jos\'{e} Montellano-Ballesteros $^{\mathrm{a}}$}
\footnote{Corresponding author. \\ E-mail addresses: hgaleana@matem.unam.mx (H. Galena-S\'{a}nchez), 
                                                                                   llano@xanum.uam.mx (B. Llano),
                                                                                   juancho@matem.unam.mx (J. J. Montellano-Ballesteros). }%

\address{$^{\mathrm{a}}$ Instituto de Matem\'{a}ticas, UNAM, Ciudad Universitaria, 04510, M\'{e}xico, D. F.}
\address{$^{\mathrm{b}}$ Departamento de Matem\'{a}ticas, Universidad Aut\'{o}noma Metropolitana, Iztapalapa, San Rafael Atlixco 186, Colonia Vicentina, 09340, M\'{e}xico, D.F.}%

%
\begin{abstract}

%
We study $k$-colored kernels in $m$-colored digraphs. An $m$-colored digraph 
$D$ has $k$-colored kernel if there exists a subset $K$ of its vertices such
that

(i) from every vertex $v\notin K$ there exists an at most $k$-colored
directed path from $v$ to a vertex of $K$ and

(ii) for every $u,v\in K$ there does not exist an at most $k$-colored
directed path between them.

In this paper, we prove that for every integer $k\geq 2$ there exists a $%
(k+1)$-colored digraph $D$ without $k$-colored kernel and if every directed
cycle of an $m$-colored digraph is monochromatic, then it has a $k$-colored
kernel for every positive integer $k.$ We obtain the following results for
some generalizations of tournaments:

(i) $m$-colored quasi-transitive and $3$-quasi-transitive digraphs have a $k$%
-colored kernel for every $k\geq 3$ and $k\geq 4,$ respectively (we
conjecture that every $m$-colored $l$-quasi-transitive digraph has a $k$%
-colored kernel for every $k\geq l+1)$, and

(ii) $m$-colored locally in-tournament (out-tournament, respectively)
digraphs have a $k$-colored kernel provided that every arc belongs to a
directed cycle and every directed cycle is at most $k$-colored.

\end{abstract}%

\begin{keyword}
$m$-colored digraph, $k$-colored kernel, $k$-colored absorbent set, $k$%
-colored independent set 
\end{keyword}%

\end{frontmatter}%

\section{Introduction}

Let $j,$ $k$ and $m$ be positive integers. A digraph $D$ is said to be $m$%
\textit{-colored} if the arcs of $D$ are colored with $m$ colors. Given $%
u,v\in V(D),$ a directed path from $u$ to $v$ of $D,$ denoted by $u\leadsto
v,$ is $j$\textit{-colored }if all its arcs\textit{\ }use exactly\textit{\ }$%
j$ colors and it is represented by $u\leadsto _{j}v.$ When $j=1,$ the
directed path is said to be \textit{monochromatic. }A nonempty set $%
S\subseteq V(D)$ is a $k$\textit{-colored} \textit{absorbent set }if for
every vertex $u\in V(D)-S$ there exists $v\in S$ such that $u\leadsto _{j}v$
with $1\leq j\leq k.$ A nonempty set $S\subseteq V(D)$ is a called a $k$%
\textit{-colored independent} \textit{set }if for every $u,v\in S$ there
does not exist $u\leadsto _{j}v$ with $1\leq j\leq k.$ Let $D$ be an $m$%
-colored digraph. A set $K\subseteq V(D)$ is called a $k$\textit{-colored
kernel }if $K$ is a $k$-colored absorbent and independent set.

This notion was introduced in \cite{MCh} and it is a natural generalization
to kernels by monochromatic directed paths (the case of $1$-colored kernels)
defined first in \cite{GS1}. It is also a generalization of the classic
notion of kernels in digraphs which has widely studied in different contexts
and has an extensive literature (see \cite{BG} for a recent remarkable
survey on the topic). In \cite{MCh}, among other questions, the following
problem is solved: given an $m$-colored digraph $D,$ is it true that $D$ has
a $k$-colored kernel if and only if $D$ has a $(k+1)$-colored kernel? The
answer is no for both implications. In fact, it can be proved (see Theorems
6.6 and 6.7 of the already mention thesis) that

\begin{enumerate}
\item[(i)] for every couple of positive integers $k$ and $s$ with $k<s,$
there exists a digraph $D$ and an arc coloring of $D$ such that $D$ has a $k$%
-colored kernel and $D$ does not have an $s$-colored kernel and

\item[(ii)] for every couple of positive integers $k$ and $s$ with $k<s,$
there exists a digraph $D$ and an arc coloring of $D$ such that $D$ has an $%
s $-colored kernel and $D$ does not have a $k$-colored kernel.
\end{enumerate}

The study of $1$-colored kernels in $m$-colored digraphs already has a
relatively extense literature and has explored sufficient conditions for the
existence of such kernels in many infinite families of special digraphs as
tournaments (particularly, in connection with so called Erd\H{o}s' problem,
see for instance \cite{Ming}, \cite{SSW}, \cite{GS-RM1} and \cite{GS-RM2})
and its generalizations (multipartite tournaments and quasi-transitive
digraphs). As well, it has been of interest searching coloring conditions on
subdigraphs of general digraphs to guarantee the existence of $1$-colored
kernels. These results inspire this work. Taking into account these kind of
theorems, it is natural to ask which of them can be generalized or could be
weakened in their conditions so we can ensure that there exists a $k$%
-colored kernel. In this paper we begin to take the first steps toward this
goal and exhibit some results of this type.

In Section 3, we show some results for general digraphs. In particular, we
prove that for every integer $k\geq 2$ there exists a $(k+1)$-colored
digraph without $k$-colored kernel and if every directed cycle of an $m$%
-colored digraph is monochromatic, then it has a $k$-colored kernel for
every positive integer $k.$ In Section 4, we show that $m$-colored
quasi-transitive and $3$-quasi-transitive digraphs have $k$-colored kernel
for every $k\geq 3$ and $k\geq 4,$ respectively. These theorems provide
evidences to conjecture that every $m$-colored $l$-quasi-transitive digraph
(see its definition in the preliminaries) has a $k$-colored kernel for every 
$k\geq l+1.$ Finally, it is proved that an $m$-colored locally in-tournament
(out-tournament, respectively) digraph has a $k$-colored kernel if

\begin{enumerate}
\item[(i)] every arc belongs to a directed cycle and

\item[(ii)] every directed cycle is at most $k$-colored.
\end{enumerate}

\section{Preliminaries}

In general, we follow the terminology and notations of \cite{BJ-G}. If $T$
is directed path of $D$ and $w,z\in V(T),$ then $w\leadsto ^{T}z$ denotes
the directed subpath from $w$ to $z$ along $T.$

A subdigraph $D^{\prime }$ of an $m$-colored digraph $D$ is said to be 
\textit{monochromatic }if every arc of $D^{\prime }$ is colored alike and
let \textit{colors}$(D^{\prime })$ denote the set of colors used by the arcs
of $D^{\prime }.$

Recall that a \textit{kernel }$K$ of $D$ is an independent set of vertices
so that for every $u\in V(D)\setminus K$ there exists $(u,v)\in A(D),$ where 
$v\in K.$ We say that a digraph $D$ is \textit{kernel perfect }if every
nonempty induced subdigraph of $D$ has a kernel. An arc $(u,v)\in A(D)$ is 
\textit{asymmetrical }(resp. \textit{symmetrical}) if $(v,u)\notin A(D)$
(resp. $(v,u)\in A(D)$).

Given an $m$-colored digraph $D,$ we define the $k$\textit{-colored closure}
of $D,$ denoted by $\mathfrak{C}_{k}(D),$ as the digraph such that $V(%
\mathfrak{C}_{k}(D))=V(D)$ and%
\begin{equation*}
A(\mathfrak{C}_{k}(D))=\{(u,v):\exists \,u\leadsto _{j}v,1\leq j\leq k\}.
\end{equation*}

\begin{remark}
Observe also that every $m$-colored digraph $D$ has a $k$-colored kernel if
and only if $\mathfrak{C}_{k}(D)$ has a kernel. \label{closure}
\end{remark}

We will use the following theorem of P. Duchet \cite{Duchet}.

\begin{theorem}
If every directed cycle of a digraph $D$ has a symmetrical arc, then $D$ is
kernel-perfect. \label{Duchet}
\end{theorem}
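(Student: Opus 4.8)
The plan is to prove the equivalent statement that $D$ itself has a kernel, and then to observe that the hypothesis is hereditary, so the same argument applied to every nonempty induced subdigraph yields kernel-perfectness. Hereditariness is immediate: any directed cycle of an induced subdigraph $D[W]$ is a directed cycle of $D$, and its guaranteed symmetrical arc $(a,b)$ has both endpoints in $W$, so $(b,a)$ also lies in $D[W]$; hence that arc is still symmetrical in $D[W]$ and $D[W]$ again satisfies the hypothesis. Thus the whole theorem reduces to the single existence statement, which I would attack by induction on $\left\vert V(D)\right\vert $.

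The key structural observation is that the spanning subdigraph $D_{a}$ consisting of all \emph{asymmetrical} arcs of $D$ is acyclic: a directed cycle of $D_{a}$ would be a directed cycle of $D$ all of whose arcs are asymmetrical, contradicting the hypothesis. Being acyclic and finite, $D_{a}$ admits a \emph{source} $v$, that is, a vertex with no asymmetrical in-arc. Equivalently, every arc of $D$ entering $v$ is symmetrical, so each in-neighbour of $v$ is simultaneously an out-neighbour of $v$. This choice of $v$ is the heart of the argument.

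For the inductive step I would delete $v$ and apply the induction hypothesis to $D-v$ (which still satisfies the hypothesis by hereditariness) to obtain a kernel $K^{\prime }$. If some out-neighbour of $v$ already lies in $K^{\prime }$, then $K^{\prime }$ absorbs $v$ as well and remains independent in $D$ (adjoining the vertex $v\notin K^{\prime }$ creates no arc inside $K^{\prime }$), so $K^{\prime }$ is a kernel of $D$. Otherwise no out-neighbour of $v$ lies in $K^{\prime }$, and I claim $K:=K^{\prime }\cup \{v\}$ is a kernel of $D$. Absorption is clear, since every vertex of $V(D)\setminus K$ is already absorbed by $K^{\prime }$. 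For independence it suffices to exclude an in-neighbour $u\in K^{\prime }$ of $v$; but any arc $u\rightarrow v$ is symmetrical because $v$ is a source of $D_{a}$, so $v\rightarrow u$ holds and $u$ would be an out-neighbour of $v$ lying in $K^{\prime }$, contradicting the case assumption.

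The only point that needs care — and precisely where the symmetric-arc hypothesis is used — is this independence check when $v$ is adjoined to $K^{\prime }$: a naive deletion-of-one-vertex induction fails because incoming arcs to $v$ can destroy independence. Selecting $v$ as a source of the asymmetrical part $D_{a}$ neutralizes exactly this obstacle, since then every in-arc of $v$ is half of a digon and is therefore controlled by the same dichotomy that governs the out-arcs. I expect this to be the sole genuine subtlety; the base case $\left\vert V(D)\right\vert \leq 1$ and the absorption bookkeeping are routine.
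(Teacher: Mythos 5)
The paper does not prove this statement at all: it is quoted as a known theorem of Duchet (reference \cite{Duchet}) and used as a black box, so there is no in-paper argument to compare yours against. Judged on its own, your proof is correct and is essentially the classical argument for Duchet's theorem. The reduction to the existence statement via hereditariness is sound (a directed cycle of an induced subdigraph is a directed cycle of $D$, and its symmetrical arc survives induction of the subdigraph on a vertex set containing both endpoints). The hypothesis is exactly equivalent to the acyclicity of the asymmetrical part $D_{a}$, so a source $v$ of $D_{a}$ exists, and your two-case analysis is airtight: in the first case $K^{\prime}$ stays independent because $D-v$ is induced and it absorbs $v$ through the out-neighbour it contains; in the second case the only threat to the independence of $K^{\prime}\cup\{v\}$ is an arc from some $u\in K^{\prime}$ into $v$, and the choice of $v$ as a source of $D_{a}$ forces that arc to be half of a digon, returning you to the excluded first case. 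You correctly identify this independence check as the only place the hypothesis enters. One could quibble that you should state the base case ($|V(D)|\leq 1$) and note that digons cannot occur in $D_{a}$ by definition of an asymmetrical arc, but these are cosmetic; the proof stands.
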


In \cite{Berge} (see Corollary 2 on page 311) it is proved that in a digraph 
$D,$ there exists a set $B\subseteq V(D)$ such that

\begin{enumerate}
\item[(I)] no directed path joins two distinct vertices of $B$ and

\item[(A)] each vertex $u\notin B$ is the initial endpoint of a directed
path finishing in a vertex of $B.$
\end{enumerate}

Let us define a set of vertices $B\subseteq V(D)$ to be a \textit{kernel by
directed paths of }$D,$ if it satisfies conditions (I) (independence by
directed paths) and (A) (absorbency by directed paths). Therefore, the
result stated before can be settled as

\begin{theorem}
Every digraph $D$ has a kernel by directed paths. \label{Berge}
\end{theorem}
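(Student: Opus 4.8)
The plan is to exploit the reachability structure of $D$ directly. I would define a relation $\preceq$ on $V(D)$ by setting $u\preceq v$ whenever there is a directed path $u\leadsto v$ (adopting the convention that the trivial path gives $u\preceq u$). This relation is reflexive and transitive, hence a preorder; it fails to be a partial order precisely because two distinct vertices lying on a common directed cycle satisfy both $u\preceq v$ and $v\preceq u$. The equivalence classes of the associated relation ($u\equiv v$ iff $u\preceq v$ and $v\preceq u$) are exactly the strong components of $D$, and $\preceq$ descends to a genuine partial order on the set of these components; equivalently, the condensation $D^{*}$ of $D$, obtained by contracting each strong component to a single vertex, is acyclic.

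First I would pass to $D^{*}$ and single out its \emph{terminal} strong components, that is, those components having no out-arcs in $D^{*}$. Since $D$ is finite and $D^{*}$ is acyclic, following out-arcs from any vertex of $D^{*}$ eventually reaches such a terminal component. I would then define $B$ by selecting exactly one representative vertex from each terminal strong component of $D$.

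To verify absorbency (A), take $u\notin B$: its strong component reaches some terminal component $C$ along a directed path in $D^{*}$, and lifting this path back to $D$ yields a directed path from $u$ into $C$; since $C$ is strongly connected, this extends to a path ending at the chosen representative in $B$ (and if $u$ already lies in a terminal component, strong connectivity of that component alone supplies the path to its representative). To verify independence (I), let $b,b'$ be distinct elements of $B$, necessarily lying in distinct terminal components $C\neq C'$; a directed path $b\leadsto b'$ would induce a path from $C$ to $C'$ in $D^{*}$, contradicting that $C$ is a sink. Hence no directed path joins two distinct vertices of $B$, and $B$ is a kernel by directed paths.

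The main obstacle is handling directed cycles, along which reachability is symmetric and therefore forbids placing more than one cycle vertex in $B$. The passage to the condensation is exactly the device that quotients away this symmetry and reduces the problem to locating maximal elements of a finite partial order, which always exist; thus finiteness of $D$ is the only genuine hypothesis needed.
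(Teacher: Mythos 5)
Your argument is correct and complete: passing to the condensation, which is acyclic because the strong components are exactly the classes of mutual reachability, and choosing one representative from each sink component gives a set that is independent by directed paths (a path between representatives of distinct sinks would yield an arc out of a sink in the condensation) and absorbent by directed paths (every vertex reaches some sink component, and strong connectivity finishes the job); you are also right that finiteness is the only hypothesis really used. The paper itself offers no proof of this statement --- it is quoted from Berge's book (Corollary 2, p.\ 311) --- so there is nothing to compare against, but the condensation-and-sink-components argument you give is the standard proof of that result and can stand on its own.
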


A digraph $D$ is called $k$-\textit{quasi-transitive }if whenever distinct
vertices $u_{0},u_{1},\ldots ,u_{k}\in V(D)$ such that 
\begin{equation*}
u_{0}\longrightarrow u_{1}\longrightarrow \cdots \longrightarrow u_{k}
\end{equation*}%
there exists at least $(u_{0},u_{k})\in A(D)$ or $(u_{k},u_{0})\in A(D).$
When $k=2$ or $3,$ the digraph $D$ is simply said to be \textit{%
quasi-transitive }(introduced in \cite{BJ}) or $3$\textit{-quasi-transitive}
(see \cite{BJ-3-qt} and \cite{GS-G-U}) respectively. The following is a
well-known result proved in \cite{BJ}.

\begin{proposition}[\protect\cite{BJ}, Corollary 3.2]
If a quasi-transitive digraph $D$ has a directed path $u\leadsto v$ but $%
(u,v)\notin A(D),$ then either $(v,u)\in A(D),$ or there exist vertices $%
x,y\in V(D)-\{u,v\}$ such that 
\begin{eqnarray*}
u &\longrightarrow &x\longrightarrow y\longrightarrow v\text{ and} \\
v &\longrightarrow &x\longrightarrow y\longrightarrow u
\end{eqnarray*}%
are directed paths in $D.$ \label{CorBJ}
\end{proposition}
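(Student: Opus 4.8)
The plan is to argue by induction on the length $n$ of a shortest directed path $P : u = u_0 \to u_1 \to \cdots \to u_n = v$ from $u$ to $v$; since $(u,v)\notin A(D)$ we have $n\ge 2$. First I would record two consequences of minimality combined with quasi-transitivity. Because $P$ is shortest there are no \emph{forward chords}: $(u_i,u_j)\notin A(D)$ whenever $j-i\ge 2$, since such an arc would yield a shorter $u$--$v$ path. Applying quasi-transitivity to each consecutive triple $u_{i-1}\to u_i\to u_{i+1}$ and using that $(u_{i-1},u_{i+1})\notin A(D)$, I obtain the backward arcs $(u_{i+1},u_{i-1})\in A(D)$ for $1\le i\le n-1$; in particular every three consecutive vertices of $P$ span a directed $3$-cycle.

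\emph{Base cases.} If $n=2$, the backward arc is exactly $(u_2,u_0)=(v,u)\in A(D)$, so the first alternative holds. If $n=3$, the second alternative holds with $x=u_1$ and $y=u_2$: indeed $u=u_0\to u_1\to u_2\to u_3=v$ is $P$ itself, while the two backward arcs $(u_3,u_1)$ and $(u_2,u_0)$ give $v=u_3\to u_1\to u_2\to u_0=u$, so both required directed paths are present and $x,y\notin\{u,v\}$ (the vertices of $P$ being distinct).

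\emph{Inductive step ($n\ge 4$).} Here I would peel off one end of $P$. The truncation $u_0\to\cdots\to u_{n-1}$ is a shortest $u_0$--$u_{n-1}$ path of length $n-1\ge 2$ with $(u_0,u_{n-1})\notin A(D)$ (a forward chord), so the induction hypothesis applies to the pair $(u_0,u_{n-1})$ and yields either $(u_{n-1},u_0)\in A(D)$ or a gadget $a,b$ with $u_0\to a\to b\to u_{n-1}$ and $u_{n-1}\to a\to b\to u_0$. The task is then to upgrade this information about $(u_0,u_{n-1})$ to the desired conclusion about $(u_0,u_n)$, using the path arc $u_{n-1}\to u_n$, the backward arc $u_n\to u_{n-2}$, and fresh applications of quasi-transitivity to settle the adjacencies of $u_n$ with $a$ and of $b$ (or $u_{n-1}$) with $u_n$, and so relocate or extend the gadget.

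This last synthesis is exactly where the real work lies and is the step I expect to be the main obstacle: the inductively produced objects sit ``one vertex short,'' and threading them into a single symmetric gadget for $(u_0,u_n)$ forces a case analysis (arc versus gadget, and the several orientations that quasi-transitivity permits at each junction). I would organize it by showing that in every case either a long backward arc $(u_n,u_0)$ is forced---giving the first alternative---or a common pair dominating $u$ and $v$ survives the extension---giving the second. Since the statement is symmetric in $u$ and $v$ (both gadget conditions $u\to x\to y\to v$ and $v\to x\to y\to u$ are required simultaneously), truncating either end of $P$ is equally valid, which I would use to halve the number of configurations that need to be checked.
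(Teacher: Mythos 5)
The paper offers no proof of this proposition --- it is quoted verbatim from Bang-Jensen and Huang (Corollary 3.2 of the cited reference) --- so your attempt has to stand on its own. Your setup and base cases are correct: on a shortest path $u=u_{0}\rightarrow u_{1}\rightarrow \cdots \rightarrow u_{n}=v$ there are no forward chords, quasi-transitivity applied to each consecutive triple then forces the backward arcs $(u_{i+1},u_{i-1})\in A(D)$, and these give $(v,u)\in A(D)$ when $n=2$ and the gadget $x=u_{1}$, $y=u_{2}$ when $n=3$.

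The genuine gap is the inductive step for $n\geq 4$, which you explicitly leave as a plan rather than an argument; as written, nothing is established beyond $n=3$. The truncation strategy also makes the problem harder than it is: when the induction hypothesis returns a gadget $a,b$ for the pair $(u_{0},u_{n-1})$, those vertices need not lie on $P$, and quasi-transitivity gives you no direct handle on the adjacencies between $u_{n}$ and $a,b$ (it only applies to directed $2$-paths, not to pairs of common out-neighbours), so ``relocating the gadget'' is not a routine case check. The clean way to finish is to drop the truncation and propagate backward arcs from $u_{n}$ down the path: from $(u_{n},u_{n-2})$ and the backward arc $(u_{n-2},u_{n-4})$, quasi-transitivity yields an arc between $u_{n}$ and $u_{n-4}$, and the forward orientation is excluded (it would either shorten $P$ or be the forbidden arc $(u,v)$), so $(u_{n},u_{n-4})\in A(D)$; iterating, and inserting one extra step through $u_{n}\rightarrow u_{1}\rightarrow u_{2}$ and then $u_{n}\rightarrow u_{2}\rightarrow u_{0}$ when the parity lands you at $u_{1}$, gives $(u_{n},u_{0})=(v,u)\in A(D)$ outright for every $n\geq 4$. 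In other words, the gadget alternative arises only at $n=3$, which you have already handled; supplying this propagation argument is exactly what is missing.
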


The next result is a special case of a more general statement proved in \cite%
{GC-HC} ($d(u,v)$ denotes the distance from $u$ to $v$ for $u,v\in V(D)$).

\begin{proposition}[\protect\cite{GC-HC}, Lemma 4.4]
Let $D$ be a $3$-quasi-transitive digraph and $u,v\in V(D)$ such that there
exists $u\leadsto v.$ Then

\begin{enumerate}
\item[(i)] if $d(u,v)=3$ or $d(u,v)\geq 5,$ then $d(v,u)=1$ (that is $%
(v,u)\in A(D))$ and

\item[(ii)] if $d(u,v)=4,$ then $d(v,u)\leq 4$. \label{lemma-HC}
\end{enumerate}
\end{proposition}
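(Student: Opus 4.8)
The plan is to work with a shortest directed path $P:u=w_{0}\longrightarrow w_{1}\longrightarrow \cdots \longrightarrow w_{n}=v$, where $n=d(u,v)$, and to exploit repeatedly that every subpath of a shortest path is again shortest. The only structural tool I need is the definition of $3$-quasi-transitivity applied to the consecutive blocks of four vertices of $P$.

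First I would record the basic \emph{backward-arc observation}: for each $i$ with $0\leq i\leq n-3$, the four distinct vertices $w_{i},w_{i+1},w_{i+2},w_{i+3}$ give the directed path $w_{i}\longrightarrow w_{i+1}\longrightarrow w_{i+2}\longrightarrow w_{i+3}$, so $3$-quasi-transitivity forces $(w_{i},w_{i+3})\in A(D)$ or $(w_{i+3},w_{i})\in A(D)$. Since $P$ is shortest, $d(w_{i},w_{i+3})=3$, hence $(w_{i},w_{i+3})\notin A(D)$, and therefore $(w_{i+3},w_{i})\in A(D)$. In particular, the choice $i=0$ together with $n=3$ settles the case $d(u,v)=3$ immediately: the arc $(w_{3},w_{0})=(v,u)$ lies in $A(D)$, so $d(v,u)=1$.

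For $d(u,v)=4$ in part (ii), I would splice the backward arc $(w_{4},w_{1})$ to the middle of $P$ and then to the backward arc $(w_{3},w_{0})$: the directed path
\begin{equation*}
w_{4}\longrightarrow w_{1}\longrightarrow w_{2}\longrightarrow w_{3}\longrightarrow w_{0}
\end{equation*}
uses distinct vertices and has length $4$, giving $d(v,u)\leq 4$ at once.

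The case $d(u,v)\geq 5$ of part (i) is where the real work lies, and I expect it to be the main obstacle. The idea is to manufacture, from the backward arcs $(w_{i+3},w_{i})$ together with the forward arcs of $P$, a directed path of length \emph{exactly} $3$ from $v=w_{n}$ to $u=w_{0}$, and then to invoke $3$-quasi-transitivity one last time: its endpoints $w_{n},w_{0}$ must be adjacent, and since $(w_{0},w_{n})\notin A(D)$ (otherwise $d(u,v)=1$), one would conclude $(w_{n},w_{0})=(v,u)\in A(D)$. For $d(u,v)=5$ this is clean, since
\begin{equation*}
w_{5}\longrightarrow w_{2}\longrightarrow w_{3}\longrightarrow w_{0}
\end{equation*}
(using $(w_{5},w_{2})$, the forward arc $w_{2}\longrightarrow w_{3}$, and $(w_{3},w_{0})$) is precisely such a length-$3$ path. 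For larger $n$ I would proceed by induction on $n$, peeling the endpoints off to reduce to shorter shortest paths and combining the backward arcs they yield with fresh applications of $3$-quasi-transitivity in order to shrink the reverse path down to length $3$ and then to a single arc. The delicate point, and the step I expect to resist a one-line argument, is controlling \emph{which} of the two possible arcs $3$-quasi-transitivity returns at each intermediate application: only the shortest-path/distance constraints rule out the unwanted orientation, so I must keep these constraints available throughout the reduction, which is notably awkward for the even values of $n$, where naive chaining of the step-three backward arcs can stall before reaching $w_{0}$.
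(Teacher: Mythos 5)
First, a caveat: the paper contains no proof of this proposition at all; it is imported verbatim as Lemma 4.4 of \cite{GC-HC}, so there is no in-house argument to measure yours against. On its own terms, your treatment of $d(u,v)\in \{3,4,5\}$ is correct and complete: the backward-arc observation is sound (for each internal block $w_{i}\rightarrow w_{i+1}\rightarrow w_{i+2}\rightarrow w_{i+3}$ of a shortest path, $(w_{i},w_{i+3})\in A(D)$ would contradict $d(w_{i},w_{i+3})=3$, so $3$-quasi-transitivity forces $(w_{i+3},w_{i})\in A(D)$), the spliced path $w_{4}\rightarrow w_{1}\rightarrow w_{2}\rightarrow w_{3}\rightarrow w_{0}$ settles part (ii), and the length-$3$ path $w_{5}\rightarrow w_{2}\rightarrow w_{3}\rightarrow w_{0}$ settles $d(u,v)=5$.

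The case $d(u,v)\geq 6$, which you leave as an acknowledged sketch, is a genuine gap, and the obstacle you anticipate is not merely technical: your program cannot be completed. Take $n=6$ and let $H$ be the digraph on $\{w_{0},\ldots ,w_{6}\}$ whose arcs are exactly those your argument forces: the six forward arcs of $P$, the backward arcs $(w_{3},w_{0}),(w_{4},w_{1}),(w_{5},w_{2}),(w_{6},w_{3})$, and the two arcs $(w_{5},w_{0})$ and $(w_{6},w_{1})$ obtained by applying your $n=5$ case to the two end-subpaths of length $5$. These twelve arcs join each of the twelve cross pairs between $\{w_{0},w_{2},w_{4},w_{6}\}$ and $\{w_{1},w_{3},w_{5}\}$ exactly once and join no pair within a part, so $H$ is a semicomplete bipartite digraph and hence itself $3$-quasi-transitive (the endpoints of any directed path of length $3$ lie in different parts and are therefore adjacent). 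Since $w_{0},w_{1},w_{2}$ each have out-degree one in $H$, every path leaving $w_{0}$ begins $w_{0}w_{1}w_{2}w_{3}$, and one checks $d_{H}(w_{0},w_{6})=6$, while $w_{0}$ and $w_{6}$ are nonadjacent and $d_{H}(w_{6},w_{0})=2$ via $w_{6}\rightarrow w_{3}\rightarrow w_{0}$. Consequently no further arcs can be extracted by chasing $3$-quasi-transitivity along $P$, your induction necessarily stalls at $n=6$, and in fact $H$ contradicts part (i) as quoted for $d(u,v)=6$. This strongly suggests the citation has been strengthened in transcription; the version that is provable along your lines, and the only thing the paper actually uses later (namely that $d(u,v)\geq 3$ implies $d(v,u)\leq 4$), replaces the conclusion for $d(u,v)\geq 5$ by $d(v,u)\leq 4$. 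You should prove that weaker form, or flag the discrepancy, rather than try to complete the induction for $d(u,v)\geq 6$ as stated.
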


A\textit{\ tournament} $T$ on $n$ vertices is an orientation of a complete
graph $K_{n}.$ A digraph $D$ is a \textit{locally in-tournament digraph}
(resp. \textit{locally out-tournament digraph}) if for every $u\in V(D)$ the
set $N^{-}(u)$ of in-neighbors of $u$ (resp. $N^{+}(u)$ of out-neighbors of $%
u$) induces a tournament.

\section{Results for general digraphs}

The research on $1$-colored kernels (or kernels by monochromatic directed
paths) goes back to the classical result of Sands, Sauer and Woodrow (see 
\cite{SSW}) who proved that every $2$-colored digraph has a $1$-colored
kernel. A generalization of this theorem for $k$-colored kernels when $k\geq
2$ is not longer true as the following result shows.

\begin{theorem}
For every $k\geq 2$ there exists a $(k+1)$-colored digraph $D$ without $k$%
-colored kernel.
\end{theorem}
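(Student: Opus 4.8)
The plan is to construct, for each $k \geq 2$, an explicit $(k+1)$-colored digraph and verify directly that no subset of its vertices can serve as a $k$-colored kernel. Since a $k$-colored kernel must be simultaneously $k$-colored absorbent and $k$-colored independent, the strategy is to design a digraph where these two requirements are irreconcilable: every candidate set that is $k$-colored independent fails to absorb some vertex, and vice versa. The natural building block is a short directed cycle equipped with a carefully chosen coloring so that the "reachability with at most $k$ colors" relation, encoded by the closure $\mathfrak{C}_k(D)$ from Remark~\ref{closure}, has no kernel.

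First I would try the simplest obstruction: a directed triangle or a directed cycle $C$ on a small number of vertices, colored with $k+1$ colors in such a way that between any two vertices one can travel using at most $k$ colors in at least one direction. If the at-most-$k$-colored reachability relation makes the closure $\mathfrak{C}_k(D)$ into a structure like a complete digraph or an odd directed cycle with symmetric reach, then $\mathfrak{C}_k(D)$ has no kernel (an odd cycle viewed as an undirected graph has no independent dominating configuration compatible with the arcs). By Remark~\ref{closure}, absence of a kernel in $\mathfrak{C}_k(D)$ is exactly absence of a $k$-colored kernel in $D$, which is what we want. Concretely, I expect a construction based on a directed cycle of odd length (the classical kernel-free example $\vec{C}_3$ at the level of the closure) where the coloring is tuned so that each arc, together with some directed path around the cycle using few new colors, forces the $k$-colored reach to be symmetric between adjacent vertices while keeping the closure an odd cycle.

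The key steps, in order, would be: (1) define the vertex set and arc set of $D$ explicitly, likely on the order of $k+1$ or a small multiple of it so that $k+1$ colors can be spread one per arc along a cycle; (2) assign the coloring, ensuring exactly $k+1$ colors appear overall; (3) compute, for each ordered pair $(u,v)$, whether there is a $u \leadsto_j v$ with $1 \leq j \leq k$, thereby determining $A(\mathfrak{C}_k(D))$; (4) argue that $\mathfrak{C}_k(D)$ contains no kernel—either by exhibiting it as an odd directed cycle with no symmetric arcs (so Theorem~\ref{Duchet} does not apply and one checks directly), or by a short case analysis showing every independent set fails to be absorbent; and (5) invoke Remark~\ref{closure} to conclude $D$ has no $k$-colored kernel.

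The main obstacle I anticipate is calibrating the coloring so that the at-most-$k$-colored reachability is just right: we need $k+1$ colors to genuinely appear (so the digraph is $(k+1)$-colored and not secretly $k$-colored), yet every path that would let a single vertex absorb all others must use all $k+1$ colors and hence be \emph{excluded} from the closure, while enough shorter paths survive to make the closure kernel-free. Balancing "too few colors on short paths (making too many closure arcs)" against "too many colors (making the closure disconnected and thus trivially kernel-admitting)" is the delicate part. I would verify the boundary cases $k=2$ by hand first to pin down the right pattern, then generalize the coloring rule to arbitrary $k$ and check that the counting of colors along each relevant subpath goes through uniformly.
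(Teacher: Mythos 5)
There is a genuine gap: your text is a plan, not a proof. Steps (1)--(5) describe what a proof \emph{would} contain, but the construction itself --- the vertex set, the arc set, and above all the coloring --- is never specified, and you yourself flag the decisive difficulty (``calibrating the coloring so that the at-most-$k$-colored reachability is just right'') as something still to be done. That calibration is exactly where the content of the theorem lies, so nothing has been proved. Worse, the candidates you float do not work as stated: a directed cycle on $k+1$ vertices with one color per arc has the property that every directed path of length at most $k$ (i.e.\ every $u\leadsto v$ with $u\neq v$ along the cycle) uses at most $k$ colors, so $\mathfrak{C}_{k}(D)$ is the complete symmetric digraph and any singleton is a $k$-colored kernel. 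Your guiding heuristic --- engineer the closure to be an odd directed cycle --- also does not match what actually happens in a working example, where the closure is a dense digraph and the obstruction is of a different shape.

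For comparison, the paper's construction is the directed cycle $(u_{0},u_{1},\ldots,u_{2k+1},u_{0})$ on $2k+2$ vertices in which color $i$ is placed on the \emph{two consecutive} arcs $(u_{2i},u_{2i+1})$ and $(u_{2i+1},u_{2i+2})$ for $0\leq i\leq k$. This doubling of each color is the missing idea. It forces two things simultaneously: (a) for any two distinct vertices, at least one of the two directed subpaths of the cycle joining them omits some color entirely (each color occupies a block of two arcs, and a block can be split between the two subpaths only at an odd-indexed endpoint, which can happen for at most two colors, so for $k\geq 2$ some color lies wholly on one side), hence that subpath uses at most $k$ colors and every candidate kernel is a singleton; and (b) for the singleton $\{u_{m}\}$ there is a vertex (namely $u_{m+1}$ when $m$ is odd, or $u_{m+1}$ when $m$ is even, handled by the coloring's symmetry as the cases $m=1$ and $m=2$) whose unique directed path to $u_{m}$ traverses essentially the whole cycle and therefore uses all $k+1$ colors, so absorbency fails. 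Without an explicit coloring realizing both (a) and (b), the argument cannot be completed.
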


\begin{proof}
Let $D$ be a directed cycle $(u_{0},u_{1},u_{2},\ldots ,u_{2k+1},u_{0})$
such that arcs $(u_{2i},u_{2i+1})$ and $(u_{2i+1},u_{2i+2})$ are colored
with color $i$ for every $0\leq i\leq k$ (the subindices of the vertices are
taken modulo $2k+2).$ First, notice that if for some $0\leq m\leq 2k+1,$ a
vertex $u_{m}\in V(D)$ belongs to a $k$-colored kernel $K,$ then $%
K=\{u_{m}\}.$ Otherwise, if $u_{l}\in K$ for some $0\leq l\leq 2k+1,$ $l\neq
m,$ then there exists either $u_{m}\leadsto _{j}u_{l}$ or $u_{l}\leadsto
_{j}u_{m}$ with $1\leq j\leq k+1$ and $K$ is not $k$-colored independent.
Now, we prove that the singleton set $K$ is not even $k$-colored absorbent.
By the symmetry of the arc coloring of $D,$ we can assume without loss of
generality that $m=1$ or $m=2.$ If $m=1,$ then there exists $u_{2}\leadsto
_{k+1}u_{1}$ but there does not exist $u_{2}\leadsto _{k}u_{1}$ and $K$ is
not $k$-colored absorbent. Analogously, if $m=2,$ there exists $%
u_{3}\leadsto _{k+1}u_{2}$ but there does not exist $u_{3}\leadsto _{k}u_{2}$
and $K$ is not $k$-colored absorbent.
\end{proof}

Let $k$ be a positive integer. As a direct consequence of Theorem \ref{Berge}%
, we have the following

\begin{theorem}
Let $D$ be an $m$-colored digraph satisfying that for every $u,v\in V(D),$
if there exists $u\leadsto v,$ then there exists $u\leadsto _{j}v$ with $%
1\leq j\leq k.$ Then $D$ has a $k^{\prime }$-colored kernel for every $%
k^{\prime }\geq k.$
\end{theorem}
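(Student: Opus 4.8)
The plan is to apply Theorem~\ref{Berge} directly and verify that the kernel it produces already serves as a $k'$-colored kernel. Since $D$ is in particular a digraph, Theorem~\ref{Berge} guarantees a kernel by directed paths $B\subseteq V(D)$, i.e.\ a set satisfying the independence condition (I) and the absorbency condition (A). I claim this same set $B$ is a $k'$-colored kernel of $D$ for every $k'\geq k$, and I would prove this by checking the two defining properties separately.

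First I would verify $k'$-colored independence, which is the cost-free direction and uses none of the coloring hypothesis. Let $u,v\in B$ be distinct. By condition (I) no directed path joins them, so in particular there is no directed path $u\leadsto v$ at all; hence there can be no $j$-colored path $u\leadsto_{j}v$ for any value of $j$, and a fortiori none with $1\leq j\leq k'$. Applying the same argument to the ordered pair $(v,u)$ shows $B$ is $k'$-colored independent.

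The absorbency direction is where the hypothesis enters. Let $u\in V(D)\setminus B$. By condition (A), $u$ is the initial endpoint of some directed path ending at a vertex $v\in B$, so there exists $u\leadsto v$. The coloring hypothesis on $D$ now upgrades this to an at most $k$-colored path: there exists $u\leadsto_{j}v$ with $1\leq j\leq k$. Since $k\leq k'$, this path satisfies $1\leq j\leq k'$, so $B$ is $k'$-colored absorbent. Combining the two properties, $B$ is a $k'$-colored kernel for every $k'\geq k$.

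I do not expect a genuine obstacle here: the structural content is carried entirely by Theorem~\ref{Berge}, and the sole role of the color-saturation hypothesis is to convert the uncolored absorbing path guaranteed by (A) into one of at most $k$ colors. The only point deserving explicit mention is the asymmetry between the two halves of the argument—independence is automatic from (I) and never invokes the hypothesis, whereas the hypothesis is exactly calibrated to transfer absorbency from the uncolored setting of Theorem~\ref{Berge} to the $k'$-colored setting for every $k'\geq k$.
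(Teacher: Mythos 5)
Your proof is correct and is precisely the argument the paper intends: the paper states this theorem without proof as ``a direct consequence of Theorem~\ref{Berge},'' and your verification---independence of $B$ follows immediately from condition (I), while the coloring hypothesis upgrades the uncolored absorbing path from condition (A) to an at most $k$-colored (hence at most $k'$-colored) one---is exactly that direct consequence spelled out.
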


\begin{corollary}
Let $D$ be an $m$-colored digraph satisfying that for every $u,v\in V(D),$
if there exists $u\leadsto v,$ then there exists $u\leadsto v$ of length at
most $k.$ Then for every arc coloring of $D$ with $m$ colors $(m\geq 1),$
the digraph $D$ has a $k$-colored kernel for every $k\geq 1.$
\end{corollary}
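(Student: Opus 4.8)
The plan is to obtain this as an immediate consequence of the theorem stated just before it, by translating the hypothesis on path \emph{length} into a hypothesis on the number of \emph{colors}. The only real content is the elementary observation that a directed path of length at most $k$ consists of at most $k$ arcs, and hence its arcs display at most $k$ distinct colors. Once this is noted, the length hypothesis of the corollary yields verbatim the color hypothesis of the preceding theorem, and nothing further is required.

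Concretely, I would fix $k\geq 1$ and take an arbitrary pair $u,v\in V(D)$ with $u\leadsto v$. Applying the hypothesis, there is a directed path $P$ from $u$ to $v$ of length at most $k$; since an absorbency-relevant pair has $u\neq v$, this path carries at least one arc and at most $k$ arcs. Writing $j=\lvert\mathrm{colors}(P)\rvert$, we then have $1\leq j\leq k$, so $P$ is precisely a $u\leadsto_{j}v$ with $1\leq j\leq k$. As $u,v$ were arbitrary, $D$ satisfies the premise of the preceding theorem. Invoking that theorem gives a $k'$-colored kernel of $D$ for every $k'\geq k$; taking $k'=k$ produces the desired $k$-colored kernel. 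Because the argument uses no property of the $m$-coloring beyond the length bound, the conclusion holds for every arc coloring with $m\geq 1$ colors and for each $k\geq 1$ for which the hypothesis is assumed.

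I do not expect a genuine obstacle here, since the entire derivation rests on the two inequalities $\lvert\mathrm{colors}(P)\rvert\leq\lvert A(P)\rvert\leq k$, after which the preceding theorem performs all the work. The single point deserving a line of care is confirming that a shortest realization $P$ is counted as a $j$-colored path with $1\leq j\leq k$ rather than with $j=0$: this is guaranteed because any directed path joining two distinct vertices has at least one arc and therefore uses at least one color.
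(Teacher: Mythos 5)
Your proposal is correct and matches the paper's intent exactly: the paper states this corollary without proof as an immediate consequence of the preceding theorem, and the bridge you supply — a path of length at most $k$ has at most $k$ arcs and hence uses $j$ colors with $1\leq j\leq k$ — is precisely the observation being left implicit. Nothing further is needed.
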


Let us denote by $\limfunc{diam}(D)$ the \textit{diameter }of a digraph $D$
(defined to be $\max \{d(u,v):u,v\in V(D)\}$ where $d(u,v)$ is the distance
from $u$ to $v).$

\begin{corollary}
Let $D$ be an $m$-colored strongly connected digraph such that $diam(D)\leq
k.$ The $D$ has a $k^{\prime }$-colored kernel for every $k^{\prime }\geq k.$
\end{corollary}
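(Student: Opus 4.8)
The plan is to reduce the statement to the theorem proved just above it (the one asserting that if, whenever $u\leadsto v$ exists, there is some $u\leadsto_j v$ with $1\leq j\leq k$, then $D$ has a $k'$-colored kernel for every $k'\geq k$). The entire argument amounts to verifying that the color-bounded-path hypothesis of that theorem is forced by strong connectivity together with the diameter bound.

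First I would observe that since $D$ is strongly connected, for every ordered pair $u,v\in V(D)$ there exists a directed path $u\leadsto v$; in other words, the antecedent ``if there exists $u\leadsto v$'' of the preceding theorem holds for every pair of vertices, so there is nothing vacuous to worry about. Next I would invoke the diameter hypothesis: for all $u,v\in V(D)$ we have $d(u,v)\leq \operatorname{diam}(D)\leq k$, so a shortest directed $u$--$v$ path has length at most $k$.

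The one remaining point is to pass from a length bound to a color bound. A directed path of length at most $k$ uses at most $k$ arcs, and therefore at most $k$ distinct colors; if $j$ denotes the exact number of colors it uses, then $1\leq j\leq k$ (here $j\geq 1$ because the path is nontrivial whenever $u\neq v$). Thus for every pair $u,v$ there exists $u\leadsto_j v$ with $1\leq j\leq k$, which is precisely the hypothesis required by the previous theorem. Applying that theorem yields that $D$ has a $k'$-colored kernel for every $k'\geq k$, as claimed.

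I do not expect a genuine obstacle here: the result is a direct corollary, and the only substantive observation is the elementary fact that a path of length at most $k$ is automatically at most $k$-colored. The work has already been done by Theorem \ref{Berge} (via the preceding theorem, whose proof extracts a kernel by directed paths and checks that such a set is $k'$-colored independent and $k'$-colored absorbent once all reachability is realizable by at most $k$-colored paths); the present statement simply supplies strong connectivity and $\operatorname{diam}(D)\leq k$ as a convenient sufficient condition for that hypothesis.
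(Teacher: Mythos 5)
Your argument is correct and is exactly the route the paper intends: the corollary follows from the preceding theorem once you note that strong connectivity plus $\operatorname{diam}(D)\leq k$ gives, for every ordered pair $u,v$, a directed path of length at most $k$, which necessarily uses at most $k$ colors. The paper leaves this verification implicit, and your write-up simply makes that one-line observation explicit.
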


The following lemma will be an useful tool in proving Theorem \ref{thm-mono}.

\begin{lemma}
Let $D$ be an $m$-colored digraph such that every directed cycle of $D$ is
monochromatic. Let $u,v\in V(D)$ and $T_{1}$ denotes the directed path $%
u\leadsto v.$ If $T_{2}$ is a $v\leadsto u,$ then colors$(T_{1})=c$\textit{%
olors}$(T_{2}).$ \label{colors}
\end{lemma}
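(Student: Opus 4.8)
The plan is to pass from the two paths to the closed directed walk they form and to exploit the hypothesis through a cycle decomposition. First I would form $W = u\leadsto^{T_{1}} v\leadsto^{T_{2}} u$, the closed directed walk obtained by concatenating $T_{1}$ and $T_{2}$. Regarding the arcs of $T_{1}$ and $T_{2}$ as a multiset $M$ (an arc lying on both paths is counted twice, once labelled as a $T_{1}$-arc and once as a $T_{2}$-arc), every vertex of $W$ has in-degree equal to its out-degree in $M$. A standard extraction argument---follow arcs until a vertex first repeats, remove the resulting simple directed cycle from $M$, and iterate---decomposes $M$ into arc-disjoint simple directed cycles $\gamma_{1},\ldots,\gamma_{r}$ of $D$. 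Since every directed cycle of $D$ is monochromatic by hypothesis, each $\gamma_{i}$ carries a single color $c_{i}$.

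The key observation is that no $\gamma_{i}$ can consist solely of $T_{1}$-arcs: those arcs all lie on the simple directed path $T_{1}$, whose arc set contains no directed cycle, so a cycle built from them would force a repeated vertex along $T_{1}$, a contradiction. Hence each $\gamma_{i}$ contains at least one $T_{2}$-arc, and by the symmetric argument at least one $T_{1}$-arc. From here the equality follows at once: given a color $c\in\mathrm{colors}(T_{1})$, choose a $T_{1}$-arc of color $c$; it lies in some $\gamma_{i}$, which is monochromatic, so $c_{i}=c$, and since $\gamma_{i}$ also contains a $T_{2}$-arc, that arc has color $c_{i}=c$, whence $c\in\mathrm{colors}(T_{2})$. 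This gives $\mathrm{colors}(T_{1})\subseteq\mathrm{colors}(T_{2})$, and the reverse inclusion is identical, so $\mathrm{colors}(T_{1})=\mathrm{colors}(T_{2})$. The degenerate case $u=v$, in which both paths are trivial and carry no arcs, is immediate.

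I expect the main obstacle to be the bookkeeping in the decomposition when $T_{1}$ and $T_{2}$ share arcs or internal vertices. One must treat the arcs as a \emph{labelled} multiset so that the argument ``no extracted cycle is entirely a subpath of $T_{1}$'' remains valid even when an arc is common to both paths, and one must verify that each $\gamma_{i}$ produced by the extraction is a genuine directed cycle of $D$---with distinct vertices---rather than merely a closed sub-walk, since it is only to honest directed cycles that the monochromaticity hypothesis applies.
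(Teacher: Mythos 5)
Your proof is correct, but it takes a genuinely different route from the paper's. The paper argues by induction on the length of $T_{1}$: if the two paths meet only at $u$ and $v$ their union is a single monochromatic cycle, and otherwise it splits at the first vertex $z$ of $T_{2}$ lying on $T_{1}$, obtaining one monochromatic cycle $(z\leadsto^{T_{1}}v)\cup(v\leadsto^{T_{2}}z)$ plus a shorter pair of opposite paths between $u$ and $z$ to which the induction hypothesis applies. You instead treat the closed walk $T_{1}\cup T_{2}$ globally, decomposing its (labelled) arc multiset into arc-disjoint simple directed cycles and observing that each such cycle, being unable to live entirely inside the acyclic arc set of a single path, must meet both $T_{1}$ and $T_{2}$; monochromaticity then transfers each color across. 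Your version avoids induction and handles all intersection patterns of the two paths uniformly, at the cost of invoking (and having to justify, as you correctly flag) the Eulerian decomposition of a balanced arc multiset into honest simple cycles; the paper's version is more elementary, extracting one monochromatic cycle at a time, though its case analysis is tied to how the paths intersect. Both arguments rely on digons counting as directed cycles in the degenerate case where $T_{1}$ and $T_{2}$ are reverse arcs of one another, so you are on the same footing as the paper there. No gap.
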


\begin{proof}
We proceed by induction on the length of $T_{1},$ denoted by $l(T_{1}).$ If $%
l(T_{1})=1,$ then $T_{1}\cup T_{2}$ is a directed cycle which is
monochromatic by supposition, so \textit{colors}$(T_{1})=c$\textit{olors}$%
(T_{2}).$ Suppose that the lemma is valid for every $l(T_{1})\leq n$ for
some positive integer $n$ and we prove the claim for $l(T_{1})=n+1.$ We have
two cases:

\textbf{Case 1. }$V(T_{1})\cap V(T_{2})=\{u,v\}.$ Then $T_{1}\cup T_{2}$ is
a directed cycle which is monochromatic and consequently, \textit{colors}$%
(T_{1})=c$\textit{olors}$(T_{2}).$

\textbf{Case 2. }$\left\vert V(T_{1})\cap V(T_{2})\right\vert \geq 3.$ Let $%
z\in V(T_{1})\cap V(T_{2})$ be the first vertex of $T_{2}$ in $T_{1}$. Then 
\begin{equation*}
\gamma =(z\leadsto ^{T_{1}}v)\cup (v\leadsto ^{T_{2}}z)
\end{equation*}
is a directed monochromatic cycle. Let $T_{1}^{\prime }$ and $T_{2}^{\prime
} $ be $u\leadsto ^{T_{1}}z$ and $z\leadsto ^{T_{2}}u$ respectively. Since $%
l(T_{1}^{\prime })\leq n,$ by induction hypothesis, we have that \textit{%
colors}$(T_{1}^{\prime })=c$\textit{olors}$(T_{2}^{\prime }).$ Without loss
of generality, suppose that $\gamma $ is of color $1.$

\textbf{Subcase 2.1.} Color $1$ does not appear in $T_{1}^{\prime }.$ Since 
\textit{colors}$(T_{1}^{\prime })=c$\textit{olors}$(T_{2}^{\prime }),$ then
color $1$ does not appear in $T_{2}^{\prime }$ either and therefore%
\begin{equation*}
\mathit{colors}(T_{1})=c\mathit{olors}(T_{1}^{\prime })\cup \{1\}=c\mathit{%
olors}(T_{2}^{\prime })\cup \{1\}=c\mathit{olors}(T_{2}).
\end{equation*}

\textbf{Subcase 2.2.} Color $1$ appears in $T_{1}^{\prime }.$ Since \textit{%
colors}$(T_{1}^{\prime })=c$\textit{olors}$(T_{2}^{\prime }),$ then color
also appears in $T_{2}^{\prime }$ and therefore%
\begin{equation*}
\mathit{colors}(T_{1})=c\mathit{olors}(T_{1}^{\prime })=c\mathit{olors}%
(T_{2}^{\prime })=c\mathit{olors}(T_{2}).
\end{equation*}
\end{proof}

\begin{theorem}
Let $D$ be an $m$-colored digraph such that every directed cycle of $D$ is
monochromatic. Then $D$ has a $k$-colored kernel for every $k\geq 1.$ \label%
{thm-mono}
\end{theorem}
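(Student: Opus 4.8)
The plan is to use Remark~\ref{closure} to reduce the problem to showing that the $k$-colored closure $\mathfrak{C}_k(D)$ has a kernel, and then to apply Duchet's Theorem~\ref{Duchet}. So the goal becomes: show that every directed cycle of $\mathfrak{C}_k(D)$ has a symmetrical arc. An arc $(u,v)$ of $\mathfrak{C}_k(D)$ corresponds to the existence of a directed path $u \leadsto_j v$ in $D$ with $1 \le j \le k$; it is symmetrical exactly when $v \leadsto_{j'} u$ also exists with $1 \le j' \le k$. This is precisely where Lemma~\ref{colors} enters: whenever there is a path $u \leadsto v$ in $D$ and a return path $v \leadsto u$, the two use the same set of colors, so one is at most $k$-colored iff the other is.

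First I would take an arbitrary directed cycle $C = (w_0, w_1, \ldots, w_{r-1}, w_0)$ in $\mathfrak{C}_k(D)$. Each arc $(w_i, w_{i+1})$ of $C$ witnesses a directed path $P_i : w_i \leadsto w_{i+1}$ in $D$ that is at most $k$-colored. Concatenating all the $P_i$ around the cycle yields a closed directed walk in $D$, hence in particular, for any fixed arc $(w_i, w_{i+1})$ of $C$, the concatenation of the remaining paths $P_{i+1}, P_{i+2}, \ldots, P_{i-1}$ gives a directed walk from $w_{i+1}$ back to $w_i$ in $D$, and from this walk I can extract a directed path $w_{i+1} \leadsto w_i$. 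So in $D$ there exist both $w_i \leadsto w_{i+1}$ (namely $P_i$, at most $k$-colored) and a path $w_{i+1} \leadsto w_i$. By Lemma~\ref{colors}, these two paths use exactly the same set of colors, so the return path $w_{i+1} \leadsto w_i$ is also at most $k$-colored. This means $(w_{i+1}, w_i) \in A(\mathfrak{C}_k(D))$, i.e. the arc $(w_i, w_{i+1})$ of $C$ is symmetrical. Hence every arc of $C$ is symmetrical, so in particular $C$ has a symmetrical arc.

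Having established that every directed cycle of $\mathfrak{C}_k(D)$ contains a symmetrical arc, Duchet's theorem gives that $\mathfrak{C}_k(D)$ is kernel-perfect, and in particular has a kernel. By Remark~\ref{closure}, $D$ then has a $k$-colored kernel. Since $k \ge 1$ was arbitrary, the conclusion holds for every $k \ge 1$.

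The main obstacle I anticipate is the careful handling of the walk-to-path reduction when concatenating the witnessing paths $P_i$: the concatenation around the cycle is only a closed \emph{walk} in $D$, possibly with repeated vertices, so I must extract an honest directed \emph{path} before Lemma~\ref{colors} (which is stated for paths) can be applied. I would address this by noting that any directed walk from $w_{i+1}$ to $w_i$ contains a directed path from $w_{i+1}$ to $w_i$, and it is this extracted path to which Lemma~\ref{colors} is applied together with the single witnessing path $P_i$ in the reverse direction. One subtlety worth checking is whether Lemma~\ref{colors} actually forces the return path to be at most $k$-colored: it guarantees equality of color \emph{sets}, hence equality of the number of colors used, so if $P_i$ uses $j \le k$ colors then the return path uses exactly $j \le k$ colors as well, which is exactly what is needed.
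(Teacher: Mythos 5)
Your proposal is correct and follows essentially the same route as the paper's own proof: reduce via Remark~\ref{closure} and Duchet's theorem to showing every directed cycle of $\mathfrak{C}_k(D)$ has a symmetrical arc, concatenate the witnessing paths into a closed walk, extract a return path, and apply Lemma~\ref{colors} to bound its number of colors by $k$. The only cosmetic difference is that you symmetrize every arc of the cycle while the paper stops after symmetrizing a single arc, which already suffices.
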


\begin{proof}
In virtue of Theorem \ref{Duchet} and Remark \ref{closure}, we will prove
that every directed cycle of $\mathfrak{C}_{k}(D)$ has a symmetrical arc.
So, let $\gamma =(u_{0},u_{1},\ldots ,u_{n-1},u_{0})$ be a directed cycle of 
$\mathfrak{C}_{k}(D),$ where $n$ is a positive integer. Then for every $%
0\leq i\leq n-1$ there exists $u_{i}\leadsto _{j}u_{i+1}$ in $D,$ denoted by 
$T_{i},$ where $1\leq j\leq k$ and the indices are taken modulo $n.$
Therefore, $\bigcup_{i=1}^{n-1}T_{i}$ is a directed walk from $u_{1}$ to $%
u_{0}$ and accordingly there exists an $u_{1}\leadsto u_{0}$ which is
denoted by $\alpha .$ Since $T_{0}$ is a $u_{0}\leadsto u_{1},$ by Lemma \ref%
{colors}, \textit{colors}$(T_{0})=c$\textit{olors}$(\alpha ).$ Using that $%
T_{0}$ is $j$-colored with $1\leq j\leq k,$ we conclude that $\alpha $ is a $%
u_{1}\leadsto _{j}u_{0}$ with $1\leq j\leq k.$ Thus, $%
(u_{0},u_{1}),(u_{1},u_{0})\in A(\mathfrak{C}_{k}(D))$ and $\gamma $ has a
symmetrical arc.
\end{proof}

In case of $k=1$ in Theorem \ref{thm-mono} we have the following

\begin{corollary}[\protect\cite{GS-GG-RM}]
Let $D$ be an $m$-colored digraph such that every directed cycle of $D$ is
monochromatic. Then $D$ has a kernel by monochromatic directed paths.
\end{corollary}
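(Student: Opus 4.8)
The plan is to invoke Duchet's theorem (Theorem~\ref{Duchet}) together with the closure characterization of Remark~\ref{closure}. By the remark, $D$ has a $k$-colored kernel if and only if the $k$-colored closure $\mathfrak{C}_{k}(D)$ has a kernel, and by Duchet's theorem $\mathfrak{C}_{k}(D)$ is kernel-perfect (hence has a kernel) provided every directed cycle of $\mathfrak{C}_{k}(D)$ contains a symmetrical arc. So the whole proof reduces to showing that an \emph{arbitrary} directed cycle in the closure has a symmetrical arc; in fact I would aim to show something stronger, namely that \emph{every} arc of such a cycle is symmetrical, which suffices.

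First I would fix a directed cycle $\gamma = (u_0, u_1, \ldots, u_{n-1}, u_0)$ of $\mathfrak{C}_{k}(D)$ and unpack what each arc means: by definition of the closure, each arc $(u_i, u_{i+1})$ (indices mod $n$) witnesses a directed path $T_i: u_i \leadsto_{j} u_{i+1}$ in $D$ that is $j$-colored for some $1 \leq j \leq k$. The key idea is to build a ``return path'' that lets me apply Lemma~\ref{colors}. Concatenating $T_1, T_2, \ldots, T_{n-1}$ yields a directed walk from $u_1$ back to $u_0$ in $D$, and from any directed walk I can extract a directed path $\alpha: u_1 \leadsto u_0$. Now I have both $T_0: u_0 \leadsto u_1$ and $\alpha: u_1 \leadsto u_0$ in $D$, which is exactly the setup of Lemma~\ref{colors}.

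Then I would apply Lemma~\ref{colors} to conclude $\mathit{colors}(T_0) = \mathit{colors}(\alpha)$. Since $T_0$ is $j$-colored with $1 \leq j \leq k$, this forces $\alpha$ to use exactly the same set of at most $k$ colors, so $\alpha$ is a $u_1 \leadsto_{j} u_0$ with $1 \leq j \leq k$. Therefore $(u_1, u_0) \in A(\mathfrak{C}_{k}(D))$, and since $(u_0, u_1)$ was already an arc of $\gamma$, we have found a symmetrical arc on $\gamma$. As $\gamma$ was arbitrary, Duchet's theorem applies and $\mathfrak{C}_{k}(D)$ has a kernel, which by the remark gives the $k$-colored kernel of $D$.

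The step I expect to be the crux is the clean application of Lemma~\ref{colors}: the lemma is what converts the hypothesis ``every directed cycle of $D$ is monochromatic'' into the color-preservation statement $\mathit{colors}(T_1) = \mathit{colors}(T_2)$ for any two oppositely-directed paths between the same pair of vertices. Everything else is bookkeeping about the closure. One subtlety worth checking is that extracting a directed \emph{path} $\alpha$ from the concatenated \emph{walk} is legitimate and that $\alpha$ genuinely runs from $u_1$ to $u_0$; this is routine since any $u$–$v$ walk contains a $u$–$v$ path. The monochromatic-cycle hypothesis enters only through Lemma~\ref{colors}, so the real content of the theorem is already isolated there, and this proof is essentially a short reduction.
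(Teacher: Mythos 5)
Your argument is correct and is essentially identical to the paper's: the corollary is stated there as the $k=1$ case of Theorem~\ref{thm-mono}, and the proof of that theorem is exactly your reduction — extract a return path $\alpha$ from the concatenated walk, apply Lemma~\ref{colors} to get $\mathit{colors}(T_0)=\mathit{colors}(\alpha)$, conclude every cycle of $\mathfrak{C}_k(D)$ has a symmetrical arc, and finish with Duchet's theorem and Remark~\ref{closure}. No gaps; you have simply re-proved Theorem~\ref{thm-mono} rather than citing its $k=1$ instance.
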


\section{$k$-colored kernels in some generalizations of tournaments}

Let $T$ be a tournament. It is well-known that each vertex of maximum
out-degree of a tournament $T$ is a king (see Subsection 3.7.1 of \cite{BJ-G}%
). Dually, if $x$ is vertex of maximum in-degree in $T,$ then $y\leadsto
_{j}x$ with $1\leq j\leq 2$ for every $y\in V(T)$. Therefore $\{x\}$ is a $2$%
-colored absorbent set and so $\{x\}$ is a $2$-colored kernel of $T.$
Observe that a $k$-colored absorbent set is also $l$-colored absorbent for
every positive integer $l>k$ an we conclude that a tournament $T$ has a $k$%
-colored kernel for every $k\geq 2.$

\subsection{Quasi-transitive digraphs}

A similar result can be proved for quasi-transitive digraphs when $k\geq 3.$

\begin{lemma}
Let $D$ be an $m$-colored quasi-transitive digraph. If there exists $%
u\leadsto _{j}v$ with $1\leq j\leq k$ and there exists no $v\leadsto _{j}u$
with $1\leq j\leq k,$ then $(u,v)\in A(D).$ \label{lemma-qt}
\end{lemma}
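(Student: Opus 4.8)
The plan is to argue by contradiction: assuming the two hypotheses hold while $(u,v)\notin A(D)$, I would exhibit a directed path from $v$ to $u$ using at most $k$ colors, contradicting the hypothesis that no $v\leadsto_{j}u$ with $1\le j\le k$ exists. The workhorse is Proposition \ref{CorBJ}, which pins down the structure forced on a quasi-transitive digraph whenever a directed path $u\leadsto v$ exists but the arc $(u,v)$ is absent.

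First I would note that the hypothesis $u\leadsto_{j}v$ with $1\le j\le k$ in particular yields a directed path $u\leadsto v$. Combined with the assumption $(u,v)\notin A(D)$, Proposition \ref{CorBJ} applies and splits into two cases. If $(v,u)\in A(D)$, then this arc is by itself a monochromatic, hence $1$-colored, directed path $v\leadsto_{1}u$; since $1\le k$, this already contradicts the hypothesis. Otherwise there are vertices $x,y$ with $v\to x\to y\to u$ a directed path; this is a $v\leadsto u$ of length $3$, so it uses at most $3$ colors, i.e. it is a $v\leadsto_{j'}u$ with $1\le j'\le 3$.

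The single load-bearing step, and the point I expect to be the crux, is bounding the colors on this backward detour. The path $v\to x\to y\to u$ has length exactly $3$, so its color count is at most $3$; this is $\le k$ precisely when $k\ge 3$, and in that range the detour is a $v\leadsto_{j'}u$ with $1\le j'\le k$, delivering the contradiction. Hence in both cases $(u,v)\notin A(D)$ is impossible and $(u,v)\in A(D)$ follows. I would also flag that the argument is tight at $k=3$: the length-$3$ detour can carry three genuinely distinct colors, so for $k=2$ one can realize the double-path configuration of Proposition \ref{CorBJ} with the three backward arcs colored differently, satisfying both hypotheses while $(u,v)\notin A(D)$. This is exactly why the results of this subsection are phrased for $k\ge 3$.
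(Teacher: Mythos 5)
Your proposal is correct and follows essentially the same route as the paper: apply Proposition \ref{CorBJ} to the path $u\leadsto v$ and rule out the $(v,u)\in A(D)$ case and the $v\to x\to y\to u$ case by observing that each gives a $v\leadsto_{j'}u$ with $j'\le 3\le k$. Your explicit remark that the argument requires $k\ge 3$ (which the lemma's statement omits but the surrounding subsection supplies) is a useful clarification that the paper leaves implicit.
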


\begin{proof}
Consider $u\leadsto _{j}v$ with $1\leq j\leq k$ of minimum length. Using
Proposition \ref{CorBJ}, we have one of the following possibilities:

\begin{enumerate}
\item[(i)] $(v,u)\in A(D)$ which is impossible by supposition (there exists
no $v\leadsto _{j}u$ with $1\leq j\leq k).$

\item[(ii)] $(u,v)\in A(D)$ and the conclusion follows.

\item[(iii)] There exist vertices $x,y\in V(D)-\{u,v\}$ such that 
\begin{eqnarray*}
u &\longrightarrow &x\longrightarrow y\longrightarrow v\text{ and} \\
v &\longrightarrow &x\longrightarrow y\longrightarrow u
\end{eqnarray*}
are directed paths in $D.$ This is also impossible since 
\begin{equation*}
v\longrightarrow x\longrightarrow y\longrightarrow u
\end{equation*}
is a $v\leadsto _{j}u$ with $1\leq j\leq 3.$
\end{enumerate}
\end{proof}

\begin{theorem}
Let $D$ be an $m$-colored quasi-transitive digraph. Then $D$ has a $k$%
-colored kernel for every $k\geq 3.$
\end{theorem}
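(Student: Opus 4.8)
The plan is to use Remark~\ref{closure} to reduce the problem to finding an ordinary kernel of the $k$-colored closure $\mathfrak{C}_{k}(D)$, and then to apply Duchet's Theorem~\ref{Duchet}. Thus it suffices to show that \emph{every directed cycle of $\mathfrak{C}_{k}(D)$ has a symmetrical arc}. I would fix a directed cycle $\gamma =(u_{0},u_{1},\ldots ,u_{n-1},u_{0})$ of $\mathfrak{C}_{k}(D)$ (indices mod $n$) and argue by contradiction, assuming that none of its arcs is symmetrical.

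The first step is to pull $\gamma$ back into $D$. For each arc $(u_{i},u_{i+1})$ of $\gamma$ there is a $u_{i}\leadsto _{j}u_{i+1}$ with $1\leq j\leq k$, while the assumed asymmetry means there is no $u_{i+1}\leadsto _{j}u_{i}$ with $1\leq j\leq k$. Lemma~\ref{lemma-qt} then forces $(u_{i},u_{i+1})\in A(D)$, so $\gamma $ is a \emph{genuine} directed cycle of $D$. This is precisely the step that consumes the hypothesis $k\geq 3$, since the third case in the proof of Lemma~\ref{lemma-qt} produces a $v\leadsto _{j}u$ with $1\leq j\leq 3$. Observe also that $n\geq 3$, since a genuine $2$-cycle $u_{0}\to u_{1}\to u_{0}$ would already give a symmetrical arc.

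The core of the argument is an \emph{anchored chord-extension} driven by quasi-transitivity, all chords emanating from $u_{0}$. Applying quasi-transitivity to $u_{0}\to u_{1}\to u_{2}$ yields $(u_{2},u_{0})\in A(D)$ or $(u_{0},u_{2})\in A(D)$. In the first case $u_{0}\to u_{1}\to u_{2}\to u_{0}$ is a directed triangle, so $u_{1}\to u_{2}\to u_{0}$ is a path using at most two colors; hence $(u_{1},u_{0})\in A(\mathfrak{C}_{k}(D))$, and together with $(u_{0},u_{1})\in A(D)$ the arc $(u_{0},u_{1})$ of $\gamma $ is symmetrical, a contradiction. In the second case I keep $u_{0}$ as the anchor and extend the chord: assuming $(u_{0},u_{i})\in A(D)$ for some $2\leq i\leq n-2$, quasi-transitivity applied to $u_{0}\to u_{i}\to u_{i+1}$ gives either $(u_{i+1},u_{0})\in A(D)$ — whence the triangle on $u_{0},u_{i},u_{i+1}$ makes the arc $(u_{i},u_{i+1})$ of $\gamma $ symmetrical via the at most $2$-colored detour $u_{i+1}\to u_{0}\to u_{i}$, a contradiction — or $(u_{0},u_{i+1})\in A(D)$, extending the chord. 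Iterating up to $i=n-1$ produces $(u_{0},u_{n-1})\in A(D)$, which together with the genuine closing arc $(u_{n-1},u_{0})$ makes $(u_{n-1},u_{0})$ symmetrical. Every branch terminates in a contradiction, so $\gamma $ carries a symmetrical arc; by Theorem~\ref{Duchet}, $\mathfrak{C}_{k}(D)$ is kernel-perfect and in particular has a kernel, and the conclusion follows from Remark~\ref{closure}.

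The main obstacle, and the reason I would avoid the naive induction on $l(\gamma )$ that contracts a subpath $u_{0}\to u_{1}\to u_{2}$ to the chord $(u_{0},u_{2})$, is that the symmetrical arc guaranteed on the shorter cycle can turn out to be the \emph{new chord} rather than an arc of $\gamma $; and splicing the associated at most $k$-colored path onto an incident arc of $\gamma $ can raise the color count to $k+1$, so the symmetry fails to transfer to $\gamma $. Anchoring all chords at the single vertex $u_{0}$ is what resolves this: each potential symmetry is realized through a detour of at most two arcs, of the form $u_{i+1}\to u_{0}\to u_{i}$, which uses at most two colors and hence always lies in $\mathfrak{C}_{k}(D)$. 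Consequently the symmetry always lands on a genuine arc of $\gamma $ itself, which is exactly what Duchet's theorem requires.
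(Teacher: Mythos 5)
Your proposal is correct and follows essentially the same route as the paper's proof: reduce via Remark \ref{closure} and Theorem \ref{Duchet} to showing every directed cycle of $\mathfrak{C}_{k}(D)$ has a symmetrical arc, use Lemma \ref{lemma-qt} to turn a putative non-symmetrical cycle into a genuine cycle of $D$, and then extend a chord anchored at $u_{0}$ by quasi-transitivity until an at most $2$-colored detour of the form $u_{i+1}\rightarrow u_{0}\rightarrow u_{i}$ forces a symmetrical arc. The only cosmetic differences are that the paper first normalizes a color change at $u_{1}$ (which its argument never actually uses) and phrases the iteration via the first index $i$ with $(u_{i},u_{0})\in A(D)$, whereas you iterate forward and close with the arc $(u_{n-1},u_{0})$; the substance is identical.
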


\begin{proof}
We will prove that every directed cycle in $\mathfrak{C}_{k}(D)$ has a
symmetrical arc. By contradiction, suppose that there exists a directed
cycle $\gamma =(u_{0},u_{1},\ldots ,u_{n-1},u_{0})$ without symmetrical arcs
belonging to $\mathfrak{C}_{k}(D).$ By the definition of the $k$-colored
closure, there exists $u_{i}\leadsto _{j}u_{i+1}$ with $1\leq j\leq k$ and
there exists no $u_{i+1}\leadsto _{j}u_{i}$ with $1\leq j\leq k$ in $D$ for
every $0\leq i\leq n-1$ (indices are taken modulo $n).$ By Lemma \ref%
{lemma-qt}, we have that $(u_{i},u_{i+1})\in A(D)$ for every $0\leq i\leq
n-1.$ Therefore $\gamma $ is a directed cycle of $D.$ On the other hand, $%
\gamma $ has at least one change of color in its arcs, otherwise $\gamma $
would be monochromatic and then $u_{i+1}\leadsto _{j}u_{i}$ with $1\leq
j\leq k$ and for every $0\leq i\leq n-1.$ Without loss of generality, we can
assume that the color change occurs at vertex $u_{1}$ and so arcs $%
(u_{0},u_{1})$ and $(u_{1},u_{2})$ are colored, say, of colors $1$ and $2$
respectively. Since $D$ is quasi-transitive, there exists an arc between $%
u_{0}$ and $u_{2}.$ We consider two cases:

\textbf{Case 1. }$(u_{2},u_{0})\in A(D).$ Then there exists the directed
path $u_{1}\longrightarrow u_{2}\longrightarrow u_{0}$ which is at most $2$%
-colored, a contradiction since there is no $u_{1}\leadsto _{j}u_{0}$ with $%
1\leq j\leq k$ and $k\geq 3.$

\textbf{Case 2. }$(u_{0},u_{2})\in A(D).$ Since $D$ is quasi-transitive,
there exists an arc between $u_{0}$ and $u_{3}.$ If $(u_{3},u_{0})\in A(D),$
then we have a contradiction similar to Case 1 with vertices $u_{0},$ $u_{2}$
and $u_{3}.$ So, $(u_{0},u_{3})\in A(D)$ and the procedure continues. Since $%
(u_{n-1},u_{0})\in A(D),$ there exists a first index $i$ such that $1\leq
i\leq n-1$ and $(u_{i},u_{0})\in A(D).$ Then $(u_{0},u_{i-1})\in A(D).$ But
then there exists an at most $2$-colored directed path $u_{i}\longrightarrow
u_{0}\longrightarrow u_{i-1}$ and this is a contradiction to the fact that
there is no $u_{i}\leadsto _{j}u_{i-1}$ with $1\leq j\leq k$ in $D.$
\end{proof}

We remark that an analogous argument as before can be used to prove that an $%
m$-colored quasi-transitive digraph $D$ such that every directed triangle is
monochromatic has a $k$-colored kernel for $k=1,2.$ We do not know if for
the case $k=2,$ the monochromacity of the directed triangles can be weakened.

\subsection{3-quasi-transitive digraphs}

A more elaborated proof allows us to show that a $3$-quasi-transitive
digraph has a $k$-colored kernel for every $k\geq 4.$

Let us define the \textit{flower }$F_{r}$ with $r$ petals as the digraph
obtained by replacing every edge of the star $K_{1,r}$ by a symmetrical arc.
If every edge of the complete graph $K_{n}$ is replaced by a symmetrical
arc, then the resulting digraph $D$ on $n$ vertices is symmetrical
semicomplete.

\begin{remark}
Let $F_{r}$ be an $m$-colored flower such that $r\geq 1.$ Then $\mathfrak{C}%
_{k}(F_{r})$ with $k\geq 2$ is a symmetrical semicomplete digraph. \label%
{flower}
\end{remark}

\begin{theorem}
Let $D$ be an $m$-colored $3$-quasi-transitive digraph. Then $D$ has a $k$%
-colored kernel for every $k\geq 4.$
\end{theorem}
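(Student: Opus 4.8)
The plan is to mimic the successful strategy used for quasi-transitive digraphs and for monochromatic-cycle digraphs: by Remark~\ref{closure} and Theorem~\ref{Duchet}, it suffices to show that every directed cycle of $\mathfrak{C}_{k}(D)$ has a symmetrical arc, where $k\geq 4$. So I would argue by contradiction, assuming $\gamma=(u_{0},u_{1},\ldots,u_{n-1},u_{0})$ is a directed cycle of $\mathfrak{C}_{k}(D)$ with no symmetrical arc. For each $i$ this means there exists $u_{i}\leadsto_{j}u_{i+1}$ with $1\leq j\leq k$ in $D$ but no $u_{i+1}\leadsto_{j}u_{i}$ with $1\leq j\leq k$; the whole point is to convert these colored-closure arcs back into structural arcs of $D$ and then exploit $3$-quasi-transitivity to manufacture a short (at most $4$-colored) backward path, contradicting the absence of a symmetrical arc.

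The first genuine step is to establish a $3$-quasi-transitive analogue of Lemma~\ref{lemma-qt}: if $u\leadsto_{j}v$ with $1\leq j\leq k$ and there is no $v\leadsto_{j}u$ with $1\leq j\leq k$, then $(u,v)\in A(D)$. For this I would invoke Proposition~\ref{lemma-HC}. Take a shortest such $u\leadsto v$ and let $d=d(u,v)$. If $d=1$ we are done. If $d=3$ or $d\geq 5$, part~(i) gives $(v,u)\in A(D)$, i.e. a monochromatic (hence at most $k$-colored) $v\leadsto u$, contradicting the hypothesis. The delicate values are $d=2$ and $d=4$. For $d=4$, part~(ii) only gives $d(v,u)\leq 4$, so there is a $v\leadsto u$ of length at most $4$, which uses at most $4\leq k$ colors — again a contradiction. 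The case $d=2$ is not covered by Proposition~\ref{lemma-HC} and will need a separate hands-on argument: from $u\to w\to v$ with $(u,v)\notin A(D)$, $3$-quasi-transitivity does not directly apply to a path of length $2$, so I expect to have to extend the path or use the minimality of length together with the structure forced on in- and out-neighbors of $w$. This $d=2$ analysis is where I anticipate the main obstacle, since the clean dichotomy of Proposition~\ref{lemma-HC} breaks down for short paths and the bound $k\geq 4$ (rather than $k\geq 3$ as in the quasi-transitive case) suggests that the extra slack of a fourth color is exactly what is needed to absorb these short configurations.

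Once such a lemma is in hand, the argument closes just as in the quasi-transitive theorem: the lemma forces $(u_{i},u_{i+1})\in A(D)$ for every $i$, so $\gamma$ is an actual directed cycle in $D$. If $\gamma$ were monochromatic then each backward arc gives a monochromatic $u_{i+1}\leadsto u_{i}$, contradicting independence in the closure; hence $\gamma$ has a color change, say at $u_{1}$, with $(u_{0},u_{1})$ and $(u_{1},u_{2})$ of distinct colors. I would then apply $3$-quasi-transitivity along consecutive triples of the cycle to produce, between some pair of cycle vertices, a backward arc or a short transversal arc; chaining at most a bounded number of such arcs yields a backward directed path using at most $4$ colors, contradicting the no-symmetrical-arc assumption since $k\geq 4$. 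The flower structure recorded in Remark~\ref{flower} is presumably the intended tool for handling the degenerate local configurations (stars of symmetrical arcs), since on a flower the $k$-colored closure is already symmetrical semicomplete and therefore trivially has symmetrical arcs on every cycle; I would expect to reduce the troublesome local cases to a flower and invoke that remark to finish.
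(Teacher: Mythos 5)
Your opening reduction (Remark~\ref{closure} plus Theorem~\ref{Duchet}, then Proposition~\ref{lemma-HC} to dispose of closure arcs realized by paths of length $3$ or $\geq 5$, and of length $4$ via $d(v,u)\leq 4\leq k$) matches the paper exactly. But the lemma you propose as the engine of the proof --- that a closure arc with no reverse closure arc must be an actual arc of $D$ --- is false for $3$-quasi-transitive digraphs, and the $d=2$ case you flag as ``the main obstacle'' is not a technical wrinkle to be absorbed later: it is the reason the paper's proof looks nothing like the quasi-transitive one. $3$-quasi-transitivity imposes no condition on paths of length $2$ (for instance, the digraph consisting of the single path $u\rightarrow w\rightarrow v$ is vacuously $3$-quasi-transitive, has $u\leadsto v$ and no $v\leadsto u$, yet $(u,v)\notin A(D)$), so you cannot conclude that $\gamma$ is a directed cycle of $D$, and your closing argument ``just as in the quasi-transitive theorem'' breaks down: that argument leans on quasi-transitivity applied to length-$2$ paths of the cycle to get arcs between $u_{0}$ and $u_{2}$, $u_{0}$ and $u_{3}$, and so on, which is exactly the hypothesis you no longer have.

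What the paper actually does after the initial reduction is to keep the length-$2$ realizations rather than eliminate them: it forms the closed walk $\delta$ by concatenating the arcs and length-$2$ paths of $D$ realizing the arcs of $\gamma$, rules out (via Remark~\ref{flower}) the degenerate case that two consecutive vertices of $\gamma$ lie in a flower, extracts a carefully chosen minimal closed subwalk $\varepsilon$ --- a directed cycle of length at least $3$ with possible symmetric petals attached --- and then runs a maximality argument (Claim 2), repeatedly applying $3$-quasi-transitivity to genuine length-$3$ paths inside $\varepsilon$, to show that $u_{1}$ sends an arc to one of the last two vertices of $\varepsilon$; this yields a $u_{1}\leadsto u_{0}$ of length at most $4$, hence at most $4$-colored, hence a symmetrical arc of $\mathfrak{C}_{k}(D)$, the desired contradiction. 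Your proposal contains none of this machinery, and the step it rests on (the analogue of Lemma~\ref{lemma-qt}) cannot be repaired, so as written the argument does not close.
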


\begin{proof}
Applying Theorem \ref{Duchet} and Remark \ref{closure}, we will prove that
every directed cycle of $\mathfrak{C}_{k}(D)$ has a symmetrical arc. By
contradiction, suppose that $\gamma =(u_{0},u_{1},\ldots ,u_{p},u_{0})$ is a
cycle in $\mathfrak{C}_{k}(D)$ without any symmetrical arc. Observe that if $%
p=1,$ then $\gamma $ has a symmetrical arc and we are done. So, assume that $%
p\geq 2.$

Using Proposition \ref{lemma-HC}, if there exists an arc $(u_{i},u_{i+1})\in
A(\mathfrak{C}_{k}(D))$ with $0\leq i\leq p$ (indices are taken modulo $p+1)$
of $\gamma $ which corresponds to a directed path of length at least $3$ in $%
D,$ then there exists an $u_{i+1}\leadsto u_{i}$ of length at most $4$. This
is a contradiction, $\gamma $ has a symmetrical arc between $u_{i}$ and $%
u_{i+1}.$ So, we assume that every arc of $\gamma $ corresponds to an arc or
a directed path of length $2$ in $D.$

Let $\delta $ be the closed directed walk defined by the concatenation of
the arcs and the directed paths of length $2$ corresponding to the arcs of $%
\gamma .$

First, observe that $\delta $ contains a directed path of length at least $%
3, $ otherwise $\delta $ is isomorphic to $F_{r}$ with $r\geq 2$ and by
Remark \ref{flower}, $\gamma $ has a symmetrical arc, a contradiction.

If $\delta $ contains a flower $F_{r},$ $r\geq 1$ such that two consecutive
vertices $u_{i}$ and $u_{i+1}$ of $\gamma $ belong to the vertices of $%
F_{r}, $ then by Remark \ref{flower}, there exists a symmetrical arc between 
$u_{i}$ and $u_{i+1}.$ Therefore we can assume that 
\begin{equation}
\text{there are no consecutive vertices of }\gamma \text{ in a flower.} 
\tag{*}
\end{equation}

Observe that if $\delta =\gamma ,$ as we will see, the same argument of the
proof will work even easier.

Let $\delta =(y_{0},y_{1},...,y_{s}).$ Observe that there exist $%
y_{i_{0}},y_{i_{1}},...y_{i_{p}}\in V(\delta )$ such that $i_{j}<i_{j+1}$
and $u_{l}=y_{j_{l}},$ where $0\leq l\leq p.$

We define $\varepsilon =(y_{i},y_{i+1},\ldots ,y_{i+l})$ of minimum length ($%
0\leq i\leq s$ and the indices are taken modulo $s+1)$ such that

\begin{enumerate}
\item[(i)] $y_{i}=y_{i+l},$ $l\geq 3,$

\item[(ii)] $y_{i}\neq y_{t}$ for $i+1\leq t\leq i+l-1,$

\item[(iii)] if $y_{q}=y_{r},$ then $q=r+2$ ($i+1\leq q,r\leq i+l-1$)$,$

\item[(iv)] there exist $y_{i_{1}},y_{i_{2}},\ldots ,y_{i_{k+1}}\in
V(\varepsilon )$ such that $y_{i_{1}}=u_{j},$ $y_{i_{2}}=u_{j+1},$ $\ldots ,$
$y_{i_{k+1}}=u_{j+k}$ with $k\geq 1,$ and

\item[(v)] $y_{i+1}\neq y_{i+l-1}.$
\end{enumerate}

\begin{claim}
There exists $\varepsilon $ subdigraph of $\delta .$
\end{claim}

Since $\delta $ is a closed walk, $p\geq 2$ and using (*), condition (i) is
satisfied. If there exists $t<l$ such that $y_{i}=y_{i+t},$ then, by the
minimality of $\varepsilon ,$ $t=2$ and $l-t=2$ and therefore $(i+l)-(i+t)=2 
$. By (i), we have that $y_{i}=y_{i+t}=y_{i+l}$ and so $l=4$. We obtain that 
\begin{equation*}
y_{i+1}\longleftrightarrow y_{i}=y_{i+2}=y_{i+4}\longleftrightarrow y_{i+3}.
\end{equation*}

Using that every arc of $\gamma $ corresponds to an arc or a directed path
of length $2,$ there exist two consecutive vertices of $\gamma $ in the
closed walk 
\begin{equation*}
y_{i}\longrightarrow y_{i+1}\longrightarrow y_{i+2}\longrightarrow
y_{i+3}\longrightarrow y_{i+4}.
\end{equation*}%
Therefore, we have two consecutive vertices of $\gamma $ in a flower of two
petals, a contradiction to the assumption (*). Condition (iii) follows from
the minimality of $\varepsilon $ and (iv) is immediate from the definition
of $\delta $ and the fact that $l\geq 3.$ If $y_{i+1}=y_{i+l-1},$ then by
(iii), $l=4$ and therefore%
\begin{equation*}
y_{i+4}=y_{i}\longleftrightarrow y_{i+1}=y_{i+3}\longleftrightarrow y_{i+2}
\end{equation*}%
which is a flower with two petals and again there are two consecutive
vertices of $\gamma $ in a flower of two petals, a contradiction to the
assumption (*). Condition (v) follows.

Claim 1 is proved.

We remark that as a consequence of this claim and since $\delta $ is not a
flower by supposition, $\varepsilon $ is a directed cycle of length at least 
$3$ with perhaps symmetrical arcs attached to some vertices (maybe none) of
the cycle for which the exterior endpoints are vertices of $\gamma $. For
example, closed walk $\varepsilon $ with a directed cycle of length $6$ and
two symmetrical arcs attached is depicted in the next figure. Observe that
in this example, $y_{i+1},$ $y_{i+5}$ and $y_{i+9}$ are elements of $\gamma $
by the definition of $\delta .$%
\begin{equation*}
\begin{picture}(250,145)
\put(82,40){\vector(3,-2){32}}    \put(107,10) {$y_{i+5}$}    \put(120,19){\vector(3,2){32}}
\put(-37,45) {$y_{i+3}=u_k$}    \put(15,48){\vector(1,0){40}}    \put(55,48){\vector(-1,0){40}}    
\put(60,23) {\shortstack {$y_{i+2}$\\ $\shortparallel$ \\ $y_{i+4}$}} 
\put(155,23)  {\shortstack {$y_{i+6}$\\ $\shortparallel$ \\ $y_{i+8}$}} 
\put(179,48){\vector(1,0){40}}    \put(219,48){\vector(-1,0){40}}    
\put(224,45) {$u_j=y_{i+7}$}
\put(68,94){\vector(0,-1){40}}      \put(164,54){\vector(0,1){40}} 
\put(60,99) {$y_{i+1}$}     \put(155,99) {$y_{i+9}$}
\put(113,129){\vector(-3,-2){32}}      \put(152,108){\vector(-3,2){32}}
\put(113,134) {$y_i$}
\end{picture}%
\end{equation*}

Let us rename $\varepsilon =(y_{0},y_{1},\ldots ,y_{l}).$ By (v) of the
definition of $\varepsilon $, we have that $y_{1}\neq y_{l-1}$ and by (iv),
there exist consecutive $u_{0},u_{1},\ldots ,u_{k}\in V(\gamma )$ in $%
\varepsilon $ with $k\geq 1.$ Notice that $u_{0}$ and $u_{k}$ could not be
consecutive vertices of $\gamma $ and similarly, $(y_{l-1},y_{0})\in
A(\varepsilon )$ could not be an arc of $\gamma .$ Let $u_{1}=y_{i}$ be the
second vertex of $\gamma $ from $y_{0}.$ Observe that $1\leq i\leq 3$ by the
definition of $\varepsilon $ and either $u_{0}=y_{0}$ $(1\leq i\leq 2)$ or $%
u_{0}=y_{1}$ $(2\leq i\leq 3).$

\begin{claim}
$(u_{1},y_{j})\in A(D)$ for some $l-2\leq j\leq l-1.$
\end{claim}

To prove the claim, let $q$ be the maximum index such that $(u_{1},y_{q})\in
A(D)$ and $q\leq l-3.$ Consider $y_{q}\longrightarrow y_{q+1}\longrightarrow
y_{q+2}$. Observe that by the maximality of $q,$ $y_{q+2}\neq y_{q}$. Then%
\begin{equation*}
u_{1}\longrightarrow y_{q}\longrightarrow y_{q+1}\longrightarrow y_{q+2}
\end{equation*}%
is a directed path and $q+2\leq l-1.$ Since $D$ is $3$-quasi-transitive and
using the maximality of $q,$ we have that $(y_{q+2},u_{1})\in A(D).$ There
are no consecutive vertices $u_{i}$ and $u_{i+1}$ of $\gamma $ in $%
\{y_{q},y_{q+1},y_{q+2}\},$ otherwise there exists a $u_{i+1}\leadsto u_{i}$
of length at most $4$ and therefore $\gamma $ has a symmetrical arc between $%
u_{i}$ and $u_{i+1}$, a contradiction. Thus, the only possibility is that $%
y_{q+1}=u_{t}$ for some $2\leq t\leq k,$ $y_{q+3}\in V(\gamma )$ and $%
y_{q+3}\neq u_{t}.$ There exists the directed path%
\begin{equation*}
u_{1}\longrightarrow y_{q}\longrightarrow y_{q+1}=u_{t}\longrightarrow
y_{q+2}\longrightarrow y_{q+3}
\end{equation*}%
such that $y_{q+3}\neq u_{0}$, otherwise there exists $u_{1}\leadsto u_{0}$
of length at most $4$ and therefore $\gamma $ has a symmetrical arc between $%
u_{0}$ and $u_{1}$, a contradiction. Therefore $y_{q+3}=u_{t+1}$ with $%
t+1\leq k.$ Since $D$ is $3$-quasi-transitive, there exists an arc between $%
y_{q}$ and $y_{q+3}.$ If $(y_{q+3},y_{q})\in A(D),$ then there exists%
\begin{equation*}
y_{q+3}=u_{t+1}\longrightarrow y_{q}\longrightarrow y_{q+1}=u_{t}
\end{equation*}%
of length $2$ and $\gamma $ has a symmetrical arc between $u_{t}$ and $%
u_{t+1}$, a contradiction. So $(y_{q},y_{q+3})\in A(D).$ Consider the vertex 
$y_{q+4}$ (observe that $q+3\leq l-1).$ We have that $y_{q+4}\neq y_{q+2},$
otherwise 
\begin{equation*}
u_{t+1}=y_{q+3}\longrightarrow y_{q+4}=y_{q+2}\longrightarrow
u_{1}\longrightarrow y_{q}\longrightarrow y_{q+1}=u_{t}
\end{equation*}%
is a directed path of length $4$ and $\gamma $ has a symmetrical arc between 
$u_{t}$ and $u_{t+1}$, a contradiction. We notice that $y_{q+4}\notin
\{u_{0},y_{0}=y_{l}\},$ otherwise there exists either the directed path%
\begin{eqnarray*}
u_{1} &\longrightarrow &y_{q}\longrightarrow y_{q+3}=u_{t+1}\longrightarrow
y_{q+4}=y_{0}=u_{0}\text{ or} \\
u_{1} &\longrightarrow &y_{q}\longrightarrow y_{q+3}=u_{t+1}\longrightarrow
y_{q+4}=y_{0}\longrightarrow y_{1}=u_{0}.
\end{eqnarray*}%
Then, there exists $u_{1}\leadsto u_{0}$ of length at most $4$ and thus $%
\gamma $ has a symmetrical arc between $u_{0}$ and $u_{1}$, a contradiction.
Therefore $q+4\leq l-1$ and $y_{q+4}\neq y_{q}$ by the maximality of $q.$
Since there exists the directed path 
\begin{equation*}
u_{1}\longrightarrow y_{q}\longrightarrow y_{q+3}=u_{t+1}\longrightarrow
y_{q+4},
\end{equation*}%
we have that there is an arc between $u_{1}$ and $y_{q+4}$ (recall that $D$
is $3$-quasi-transitive). By the maximality of $q,$ we obtain that $%
(y_{q+4},u_{1})\in A(D),$ but in this case we have the directed path%
\begin{equation*}
u_{t+1}=y_{q+3}\longrightarrow y_{q+4}\longrightarrow u_{1}\longrightarrow
y_{q}\longrightarrow y_{q+1}=u_{t}
\end{equation*}%
and there exists $u_{t+1}\leadsto u_{t}$ of length $4$ and thus $\gamma $
has a symmetrical arc between $u_{t}$ and $u_{t+1}$, a contradiction. The
procedure of the claim is illustrated in the following figure.%
\begin{equation*}
\begin{picture}(250,90)
\put(101,10) {\vector(3,2){90}}
\put(44,75) {$y_0,u_0 \neq y_{q+4}$}   \put(113,78) {\vector(1,0){70}}  \put(189,75) {$u_1$}
\put(58,47) {\vector(3,2){32}}   \put(201,68) {\vector(3,-2){32}}
\put(7,38) {$u_{t+1}=y_{q+3}$}   \put(227,41) {\vector(-1,0){159}}   \put(233,38) {$y_q$}
\put(90,10) {\vector(-3,2){32}}  \put(232,32) {\vector(-3,-2){32}} 
\put(85,0) {$y_{q+2}$}   \put(181,3) {\vector(-1,0){70}}    \put(189,0) {$y_{q+1}=u_t$}
\end{picture}%
\end{equation*}

Claim 2 is proved.

As a consequence of this claim and since either $u_{0}=y_{0}$ or $%
u_{0}=y_{1} $, we have the directed path $u_{1}\longrightarrow y_{j}\leadsto
u_{0}$ $(l-2\leq j\leq l-1)$ of length at most $4$ and $\gamma $ has a
symmetrical arc between $u_{0}$ and $u_{1}$, a contradiction concluding the
proof.
\end{proof}

The results obtained for quasi-transitive and $3$-quasi-transitive digraphs
suggest that

\begin{conjecture}
Let $D$ be an $m$-colored $l$-quasi-transitive digraph. Then $D$ has a $k$%
-colored kernel for every $k\geq l+1.$
\end{conjecture}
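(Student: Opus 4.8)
The plan is to follow the template that succeeded for the cases $l=2$ and $l=3$. By Theorem~\ref{Duchet} together with Remark~\ref{closure}, it suffices to prove that every directed cycle of the closure $\mathfrak{C}_{k}(D)$ carries a symmetrical arc. So I would argue by contradiction: assume $\gamma=(u_{0},u_{1},\ldots,u_{p},u_{0})$ is a directed cycle of $\mathfrak{C}_{k}(D)$ with no symmetrical arc, where each arc $(u_{i},u_{i+1})$ is realized by some at most $k$-colored directed path $T_{i}$ in $D$ (indices modulo $p+1$). As before, $p=1$ is immediate, so I would assume $p\geq 2$.

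The first ingredient I would need is a distance lemma for $l$-quasi-transitive digraphs generalizing Proposition~\ref{CorBJ} (the case $l=2$) and Proposition~\ref{lemma-HC} (the case $l=3$): namely, that whenever $u\leadsto v$ with $d(u,v)\geq l$, there is a reverse directed path $v\leadsto u$ of length at most $l+1$. Granting this, every arc of $\gamma$ realized by a path $T_{i}$ of length at least $l$ would immediately produce a $v\leadsto u$ of length at most $l+1$, hence an at most $(l+1)$-colored and therefore at most $k$-colored reverse path (this is precisely where $k\geq l+1$ enters). That would give a symmetrical arc of $\gamma$, a contradiction. Consequently I could assume that every arc of $\gamma$ is realized in $D$ by a directed path of length at most $l-1$, which is exactly the analogue of the \textquotedblleft arc or directed path of length $2$\textquotedblright\ reduction used in the $l=3$ proof.

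With this reduction in hand, I would concatenate the short paths $T_{i}$ into a closed directed walk $\delta$ and study its structure. I would generalize the notion of \emph{flower}: instead of symmetrical petals of length one, $\delta$ may now carry symmetric pendant structures of length up to $l-1$ attached to a core directed cycle, and the analogue of Remark~\ref{flower} must guarantee that two consecutive vertices of $\gamma$ lying in such a structure force a symmetrical arc. Extracting a minimal closed subwalk $\varepsilon$ that is a directed cycle with pendant attachments and that contains a maximal run $u_{0},u_{1},\ldots,u_{r}$ of consecutive $\gamma$-vertices, I would then run a maximality argument in the spirit of Claim~2: take the largest index $q$ with $(u_{1},y_{q})\in A(D)$ along $\varepsilon$ and, repeatedly applying $l$-quasi-transitivity to directed paths starting at $u_{1}$, either push $q$ further (contradicting maximality) or produce a reverse directed path of length at most $l+1$ between two consecutive vertices of $\gamma$, the desired contradiction.

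The hard part, and the reason the statement remains a conjecture, is twofold. First, the distance lemma itself is delicate: already for $l=3$ it has an exceptional regime ($d(u,v)=4$, where one only obtains $d(v,u)\leq 4$ rather than $d(v,u)=1$), and for general $l$ one must rule out any exception that would push the reverse length above $l+1$; this requires a careful induction on distance inside an $l$-quasi-transitive digraph and is not a routine consequence of the defining property. Second, the bookkeeping in the $\varepsilon$-extraction and the maximality argument becomes substantially heavier: with realizing paths of length up to $l-1$, the arc guaranteed by $l$-quasi-transitivity from a long subpath of $\varepsilon$ may land on many different interior vertices, so the finite case analysis that closes Claim~2 for $l=3$ proliferates, and ensuring that \emph{every} branch terminates in a reverse path of length at most $l+1$ (rather than merely bounded by some larger function of $l$) is exactly the point where a uniform argument is still missing.
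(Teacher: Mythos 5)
The statement you are addressing is stated in the paper only as a \emph{conjecture}; the authors offer no proof of it, and the preceding theorems for $l=2$ and $l=3$ are presented precisely as evidence for it. Your proposal is likewise not a proof: it is a programme that mirrors the quasi-transitive and $3$-quasi-transitive arguments, and you yourself identify the two places where it is incomplete. Those two gaps are genuine and are exactly why the statement remains open. First, the entire reduction hinges on an unproven distance lemma for general $l$-quasi-transitive digraphs --- that $u\leadsto v$ with $d(u,v)\geq l$ forces a reverse path of length at most $l+1$. The paper only has this for $l=2$ (Proposition~\ref{CorBJ}) and $l=3$ (Proposition~\ref{lemma-HC}, imported from another manuscript), and already at $l=3$ the statement has an exceptional regime ($d(u,v)=4$ gives only $d(v,u)\leq 4$); nothing in the defining property of $l$-quasi-transitivity makes the general bound routine, and without it the case ``some $T_i$ has length $\geq l$'' cannot be discharged. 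Second, even granting that lemma, the structural analysis of the closed walk $\delta$ --- the generalized flower remark, the extraction of $\varepsilon$, and the maximality argument of Claim~2 --- is carried out in the paper by a finite case analysis specific to petals of length one and realizing paths of length at most two; you correctly observe that for general $l$ the pendant structures and the landing points of the arcs produced by $l$-quasi-transitivity proliferate, and no uniform argument is supplied that every branch terminates in a reverse path of length at most $l+1$ rather than some larger function of $l$.

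In short, your sketch is a faithful extrapolation of the authors' method and a reasonable research plan, but it proves nothing beyond what the paper already establishes for $l=2$ and $l=3$. To turn it into a proof you would need, at minimum, to prove the distance lemma for all $l$ (a result not available in this paper) and then to replace the hand-checked case analysis of Claim~2 by an induction or a structural theorem that works uniformly in $l$. As it stands, the statement should remain labelled a conjecture.
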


\subsection{Locally in- and out-tournament digraphs}

\begin{lemma}
Let $D$ be an $m$-colored locally out-tournament digraph such that

\begin{enumerate}
\item[(i)] every arc belongs to a directed cycle and

\item[(ii)] every directed cycle is at most $k$-colored.
\end{enumerate}

If there exists an $u\leadsto v,$ then there exists a $v\leadsto _{j}u$ with 
$1\leq j\leq k.$ \label{local-out}
\end{lemma}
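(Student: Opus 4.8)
The plan is to prove this direction-reversing statement by induction on the length of a shortest directed path $u\leadsto v$, using hypotheses (i) and (ii) to keep the color count from ever exceeding $k$. First I would record the structural meaning of (i): an arc whose endpoints lie in distinct strong components can never sit on a directed cycle, so (i) forces every arc to have both endpoints in a common strong component. Hence whenever $u\leadsto v$, the vertices $u$ and $v$ lie in one strong component and some return path $v\leadsto u$ certainly exists; the entire content of the lemma is to control how many colors such a return path must use, and that control is the whole difficulty.

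\textbf{Base case.} If $(u,v)\in A(D)$, then by (i) the arc $(u,v)$ lies on a directed cycle $C$, and by (ii) this cycle is at most $k$-colored. The subpath of $C$ running from $v$ back to $u$ is then a $v\leadsto_{j}u$ with $1\leq j\leq k$, as required.

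\textbf{Inductive step by single-cycle splicing.} Let $u=x_{0}\to x_{1}\to\cdots\to x_{n}=v$ be a shortest $u\leadsto v$ path with $n\geq 2$. The base case applied to $(x_{0},x_{1})$ gives an at most $k$-colored directed cycle $C_{1}$ through $x_{0}$ and $x_{1}$ along which $x_{1}\leadsto u$. The idea is to propagate this along the path while always staying inside a \emph{single} directed cycle, so that (ii) recaps the color count at $k$ at every stage instead of letting colors accumulate (naive concatenation of two at most $k$-colored pieces would only bound the result by $2k$). Concretely, suppose inductively that there is an at most $k$-colored directed cycle $C_{i}$ containing $u$ and $x_{i}$ with $x_{i}\leadsto u$ along $C_{i}$, and say $C_{i}$ leaves $x_{i}$ through the arc $(x_{i},w)$. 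Since $w$ and $x_{i+1}$ are both out-neighbors of $x_{i}$ and $N^{+}(x_{i})$ induces a tournament, there is an arc between them. When $(x_{i+1},w)\in A(D)$, I splice $x_{i}\to x_{i+1}\to w$ in place of $x_{i}\to w$, producing a new directed cycle $C_{i+1}$ through $u$ and $x_{i+1}$ along which $x_{i+1}\leadsto u$; by (ii) this new cycle is again at most $k$-colored, so no colors have been accumulated. Reaching $i=n$ yields the desired $v\leadsto_{j}u$ with $1\leq j\leq k$.

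\textbf{Main obstacle.} The delicate point is the reversed orientation $(w,x_{i+1})\in A(D)$, where the naive splice fails and one must reroute; here I expect to argue exactly as in the $3$-quasi-transitive proof, fixing the extremal index along $C_{i}$ at which the tournament arcs flip and combining local out-tournament comparisons with (i) to build the replacement cycle, invoking (ii) each time a genuine \emph{simple} directed cycle is formed. Checking that every splice yields a simple cycle (so that (ii) is applicable) and that the orientation analysis terminates in an at most $k$-colored return path is the crux; the strong connectivity secured in the first paragraph guarantees the required return arcs exist at each stage. Finally, although the lemma itself is a pure statement about paths in $D$, it is tailored to feed the earlier theorem obtained from Theorem~\ref{Berge}: applying the lemma to both $(u,v)$ and $(v,u)$ shows that $u\leadsto v$ forces both $v\leadsto_{j}u$ and $u\leadsto_{j}v$ with $1\leq j\leq k$, which is precisely the hypothesis needed there to produce a $k$-colored kernel.
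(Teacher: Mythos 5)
Your overall strategy --- propagate a single at most $k$-colored directed cycle through $u$ and the successive vertices $x_i$ of a shortest $u\leadsto v$ path, so that hypothesis (ii) recaps the color count at $k$ at every stage rather than letting it accumulate --- is exactly the paper's strategy; the paper packages the same bookkeeping as an extremal choice (the largest index $j$ such that $u_0$ and $u_j$ lie on a common directed cycle) rather than a step-by-step induction. Your base case is correct, your splice in the orientation $(x_{i+1},w)\in A(D)$ is correct, and the simplicity worry you raise is easily dispatched: if $x_{i+1}$ already lies on $C_i$ then $C_i$ itself is a cycle through $u$ and $x_{i+1}$ and serves as $C_{i+1}$.

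The genuine gap is the case you yourself label the main obstacle, $(w,x_{i+1})\in A(D)$, which you defer to an analogy with the $3$-quasi-transitive argument; that argument concerns flowers in the closure $\mathfrak{C}_k(D)$ and does not transfer, and this case is where essentially all the work of the lemma lives. The paper resolves it by rotating around the cycle: once $(w,x_{i+1})\in A(D)$, the vertex $x_{i+1}$ and the successor of $w$ on $C_i$ are both out-neighbors of $w$, hence adjacent; an arc out of $x_{i+1}$ would splice a new cycle through $u$ and $x_{i+1}$ (restoring your inductive invariant), so the arc must point into $x_{i+1}$, and one advances to the next vertex of $C_i$. Iterating this all the way around, every vertex of $C_i$ beyond $x_i$ sends an arc to $x_{i+1}$, until the comparison reaches $u$ itself: there, an arc $(x_{i+1},u)$ closes the cycle $(u,\dots,x_i,x_{i+1},u)$ and again restores the invariant, while an arc $(u,x_{i+1})$ shortcuts the path $x_0\to\cdots\to x_n$, contradicting your choice of a \emph{shortest} path --- a resource you set up but never invoke. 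Without this rotation-plus-minimality argument the inductive step simply does not go through, so the proposal as written does not prove the lemma.
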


\begin{proof}
Let 
\begin{equation*}
\beta :u=u_{0}\longrightarrow u_{1}\longrightarrow \cdots \longrightarrow
u_{n}=v
\end{equation*}%
be a directed path from $u$ to $v$ of minimum length. Let $u_{j}\in V(D)$ be
such that $1\leq j\leq n$ is the maximum index such that $u_{0}$ and $u_{j}$
belong to a same cycle 
\begin{equation*}
\gamma =(u_{0}=w_{0},w_{1},\ldots ,w_{l-1},w_{l}=u_{j},w_{l+1},\ldots
,w_{m}=u_{0})
\end{equation*}%
where $m\geq 1.$ If $j=n,$ we are done since every cycle of $D$ is at most $%
k $-colored. Let us suppose that $j<n$ and consider the vertex $u_{j+1}\in
V(\beta ).$ Since $D$ is locally out-tournament, $(u_{j},u_{j+1})\in A(D)$
and $(u_{j},w_{l+1})\in A(D)$, there exists an arc between $u_{j+1}$ and $%
w_{l+1}.$ If $(u_{j+1},w_{l+1})\in A(D)$, then there exists a directed cycle%
\begin{equation*}
(u_{0}=w_{0},w_{1},\ldots ,w_{l-1},w_{l}=u_{j},u_{j+1},w_{l+1},\ldots
,w_{m}=u_{0})
\end{equation*}%
which contradicts the maximality of the choice of $j.$ Therefore there
exists $(w_{l+1},u_{j+1})\in A(D).$ Since $D$ is locally out-tournament,
there exists an arc between $u_{j+1}$ and $w_{l+2}.$ If $(u_{j+1},w_{l+2})%
\in A(D)$, then there exists a directed cycle%
\begin{equation*}
(u_{0}=w_{0},w_{1},\ldots ,w_{l}=u_{j},w_{l+1},u_{j+1},w_{l+2},\ldots
,w_{m}=u_{0})
\end{equation*}%
which contradicts the maximality of the choice of $j.$ Therefore there
exists $(w_{l+2},u_{j+1})\in A(D).$ The procedure continues until we obtain
that there exists $(w_{m-1},u_{j+1})\in A(D).$ Again, since $D$ is locally
out-tournament, there exists an arc between $u_{j+1}$ and $u_{0}.$ If there
exists $(u_{j+1},u_{0})\in A(D)$, then there exists a directed cycle%
\begin{equation*}
(u_{0}=w_{0},w_{1},\ldots ,w_{l}=u_{j},u_{j+1},u_{0}),
\end{equation*}%
a contradiction to the maximality of $j.$ If there exists $%
(u_{0},u_{j+1})\in A(D)$, then the directed path 
\begin{equation*}
u=u_{0}\longrightarrow u_{j+1}\longrightarrow u_{j+2}\longrightarrow \cdots
\longrightarrow u_{n}=v
\end{equation*}%
is of shorter length than $\beta ,$ a contradiction to the choice of $\beta
. $
\end{proof}

\begin{lemma}
Let $D$ be an $m$-colored locally in-tournament digraph such that

\begin{enumerate}
\item[(i)] every arc belongs to a directed cycle and

\item[(ii)] every directed cycle is at most $k$-colored.
\end{enumerate}

If there exists an $u\leadsto v,$ then there exists a $v\leadsto _{j}u$ with 
$1\leq j\leq k.$ \label{local-in}
\end{lemma}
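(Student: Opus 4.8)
The plan is to derive this lemma from Lemma \ref{local-out} by duality, since a locally in-tournament digraph is precisely the reverse of a locally out-tournament digraph. Let $\overleftarrow{D}$ denote the digraph obtained from $D$ by reversing every arc, where the reversed arc $(v,u)$ of $\overleftarrow{D}$ inherits the color of the arc $(u,v)$ of $D$. First I would verify that $\overleftarrow{D}$ satisfies all the hypotheses of Lemma \ref{local-out}, and then simply transport the conclusion back to $D$.

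The verification has three parts. First, $\overleftarrow{D}$ is locally out-tournament: for any $w\in V(D)$ we have $N^{+}_{\overleftarrow{D}}(w)=N^{-}_{D}(w)$, and reversing the arcs inside this set turns the tournament induced by $N^{-}_{D}(w)$ in $D$ into a tournament in $\overleftarrow{D}$. Second, condition (i) is preserved, because reversing every arc of a directed cycle of $D$ yields a directed cycle of $\overleftarrow{D}$ through the corresponding arc, and conversely; hence every arc of $\overleftarrow{D}$ lies on a directed cycle. Third, condition (ii) is preserved as well, since a directed cycle of $\overleftarrow{D}$ is the reverse of a directed cycle of $D$ and uses exactly the same set of colors, so it is again at most $k$-colored.

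Now suppose there exists a directed path $u\leadsto v$ in $D$. Reversing it gives a directed path $v\leadsto u$ in $\overleftarrow{D}$. Applying Lemma \ref{local-out} to $\overleftarrow{D}$ (with $v$ playing the role of the source and $u$ the role of the target) yields a directed path $u\leadsto_{j}v$ in $\overleftarrow{D}$ with $1\leq j\leq k$. Reversing this path once more, and recalling that the coloring of $\overleftarrow{D}$ was defined so as to preserve the color of each individual arc, produces a directed path from $v$ to $u$ in $D$ using exactly the same $j$ colors. Thus there exists $v\leadsto_{j}u$ with $1\leq j\leq k$, which is the desired conclusion.

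The argument is essentially bookkeeping, and the only point requiring care is checking that each of the three structural conditions — locally out-tournament, membership of every arc in a directed cycle, and the at-most-$k$-colored property — transfers correctly under reversal, together with the basic fact that reversal preserves both the length and the exact color set of a path. If one prefers to avoid the duality device altogether, the alternative is to repeat the proof of Lemma \ref{local-out} verbatim with in-neighbors in place of out-neighbors and all arc orientations reversed throughout; this is routine but considerably longer, so I expect the reverse-digraph reduction to be the cleanest route.
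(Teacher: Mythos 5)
Your proof is correct. The reduction via the converse digraph $\overleftarrow{D}$ is sound: reversal turns a locally in-tournament digraph into a locally out-tournament digraph (since $N^{+}_{\overleftarrow{D}}(w)=N^{-}_{D}(w)$ and the reversal of a tournament is a tournament), it preserves hypotheses (i) and (ii) because directed cycles correspond bijectively under reversal with the same color sets, and the role-swap when applying Lemma \ref{local-out} to the reversed path $v\leadsto u$ in $\overleftarrow{D}$ is handled correctly, yielding $u\leadsto_{j}v$ in $\overleftarrow{D}$ and hence $v\leadsto_{j}u$ in $D$.

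This is, however, a genuinely different route from the paper's. The paper does exactly what you describe as the longer alternative: it re-proves the lemma from scratch by dualizing the argument of Lemma \ref{local-out} step by step --- taking a minimum-length path $\beta: u_0\to\cdots\to u_n$, choosing the \emph{minimum} index $j$ such that $u_j$ and $u_n$ lie on a common directed cycle, and walking the neighbor $u_{j-1}$ backwards around that cycle using the in-tournament property to reach a contradiction with either the minimality of $j$ or the minimality of $\beta$. Your reduction buys economy and eliminates the risk of transcription errors in a long dual case analysis, at the small cost of having to verify (as you do) that all three hypotheses transfer under reversal; the paper's self-contained version duplicates the combinatorics but keeps each lemma independently readable. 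Either is acceptable; yours is arguably the cleaner exposition.
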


\begin{proof}
Let 
\begin{equation*}
\beta :u=u_{0}\longrightarrow u_{1}\longrightarrow \cdots \longrightarrow
u_{n}=v
\end{equation*}%
be a directed path from $u$ to $v$ of minimum length. Let $u_{j}\in V(D)$ be
such that $0\leq j\leq n-1$ is the minimum index such that $u_{j}$ and $%
u_{n} $ belong to a same cycle 
\begin{equation*}
\gamma =(u_{n}=w_{0},w_{1},\ldots ,w_{l-1},w_{l}=u_{j},w_{l+1},\ldots
,w_{m}=u_{n})
\end{equation*}%
where $m\geq 1.$ If $j=0,$ we are done since every cycle of $D$ is at most $%
k $-colored. Let us suppose that $j>0$ and consider the vertex $u_{j-1}\in
V(\beta ).$ Since $D$ is locally in-tournament, $(u_{j-1},u_{j})\in A(D)$
and $(w_{l-1},u_{j})\in A(D)$, there exists an arc between $u_{j-1}$ and $%
w_{l-1}.$ If $(w_{l-1},u_{j-1})\in A(D)$, then there exists a directed cycle%
\begin{equation*}
(u_{n}=w_{0},w_{1},\ldots ,w_{l-1},u_{j-1},w_{l}=u_{j},w_{l+1},\ldots
,w_{m}=u_{n})
\end{equation*}%
which contradicts the minimality of the choice of $j.$ Therefore there
exists $(u_{j-1},w_{l-1})\in A(D).$ Since $D$ is locally in-tournament,
there exists an arc between $u_{j-1}$ and $w_{l-2}.$ If $(w_{l-2},u_{j-1})%
\in A(D)$, then there exists a directed cycle%
\begin{equation*}
(u_{n}=w_{0},w_{1},\ldots ,w_{l-2},u_{j-1},w_{l-1},w_{l}=u_{j},\ldots
,w_{m}=u_{n})
\end{equation*}%
which contradicts the minimality of the choice of $j.$ Therefore there
exists $(u_{j-1},w_{l-2})\in A(D).$ The procedure continues until we obtain
that there exists $(u_{j-1},w_{1})\in A(D).$ Again, since $D$ is locally
in-tournament, there exists an arc between $u_{j-1}$ and $u_{n}.$ If there
exists $(u_{n},u_{j-1})\in A(D)$, then there exists a directed cycle%
\begin{equation*}
(u_{n}=w_{0},u_{j-1},w_{l}=u_{j},w_{l+1},\ldots ,w_{m}=u_{n}),
\end{equation*}%
a contradiction to the minimality of $j.$ If there exists $%
(u_{j-1},u_{n})\in A(D)$, then the directed path 
\begin{equation*}
u_{0}\longrightarrow u_{1}\longrightarrow \cdots u_{j-1}\longrightarrow
u_{n}=v
\end{equation*}%
is of shorter length than $\beta ,$ a contradiction to the choice of $\beta
. $
\end{proof}

\begin{theorem}
Let $D$ be an $m$-colored locally out-tournament (in-tournament,
respectively) digraph such that

\begin{enumerate}
\item[(i)] every arc belongs to a directed cycle and

\item[(ii)] every directed cycle is at most $k$-colored.
\end{enumerate}

Then $D$ has a $k$-colored kernel. \label{at-most-k}
\end{theorem}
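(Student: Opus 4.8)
The plan is to reduce the existence of a $k$-colored kernel to the existence of an ordinary kernel in the $k$-colored closure $\mathfrak{C}_k(D)$, using Remark~\ref{closure}. By Theorem~\ref{Duchet} (Duchet), it suffices to show that every directed cycle of $\mathfrak{C}_k(D)$ has a symmetrical arc, so the core of the argument is verifying this property. Lemmas~\ref{local-out} and~\ref{local-in} have already done almost all of the work: they establish that whenever there exists a directed path $u\leadsto v$ in $D$ (under hypotheses (i) and (ii)), there also exists a $v\leadsto_j u$ with $1\le j\le k$.

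The key observation I would make is that Lemmas~\ref{local-out} and~\ref{local-in} immediately imply that $\mathfrak{C}_k(D)$ is a \emph{symmetrical} digraph: if $(u,v)\in A(\mathfrak{C}_k(D))$, then by definition there exists $u\leadsto_{j'}v$ with $1\le j'\le k$ in $D$, hence in particular a directed path $u\leadsto v$ exists; applying the appropriate lemma (depending on whether $D$ is locally out- or in-tournament) yields a $v\leadsto_j u$ with $1\le j\le k$, which means $(v,u)\in A(\mathfrak{C}_k(D))$ as well. Thus every arc of $\mathfrak{C}_k(D)$ is symmetrical.

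Once symmetry of $\mathfrak{C}_k(D)$ is established, the conclusion is essentially automatic. Any directed cycle of $\mathfrak{C}_k(D)$ of length at least two contains an arc $(u_i,u_{i+1})$, and since this arc is symmetrical, $(u_{i+1},u_i)\in A(\mathfrak{C}_k(D))$ too; hence every directed cycle trivially has a symmetrical arc. (If one wishes to treat loops or length-one cycles, these present no difficulty, and a cycle of length one in $\mathfrak{C}_k(D)$ would simply not arise since the closure has no loops.) By Theorem~\ref{Duchet}, $\mathfrak{C}_k(D)$ is kernel-perfect, so in particular it has a kernel, and by Remark~\ref{closure} the digraph $D$ has a $k$-colored kernel.

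I do not expect any real obstacle here, since the two preceding lemmas carry the entire burden of the proof and reduce this theorem to a short bookkeeping argument. The only point requiring mild care is to handle both cases (locally out-tournament and locally in-tournament) uniformly by invoking Lemma~\ref{local-out} in the first case and Lemma~\ref{local-in} in the second; both yield the same symmetry conclusion, so the remainder of the argument is identical. In short, the whole proof amounts to: the hypotheses force $\mathfrak{C}_k(D)$ to be symmetrical, symmetrical digraphs satisfy Duchet's condition, and Duchet's theorem together with Remark~\ref{closure} finishes the claim.
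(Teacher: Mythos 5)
Your proposal is correct and follows exactly the paper's own argument: Lemmas~\ref{local-out} and~\ref{local-in} show every arc of $\mathfrak{C}_k(D)$ is symmetrical, so Theorem~\ref{Duchet} and Remark~\ref{closure} finish the proof. No differences worth noting.
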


\begin{proof}
By Lemma \ref{local-out} (resp. Lemma \ref{local-in}), every arc of $%
\mathfrak{C}_{k}(D)$ is symmetrical and therefore $\mathfrak{C}_{k}(D)$ has
a kernel in virtue of Theorem \ref{Duchet}. By Remark \ref{closure}, we
conclude that $D$ has a $k$-colored kernel.
\end{proof}

In view of Theorems \ref{thm-mono} and \ref{at-most-k}, it is natural to ask
about the minimum number of colors of a directed cycle in a digraph $D$ such
that $D$ has a $k$-colored kernel. We state the following

\begin{conjecture}
Let $D$ be an $m$ colored digraph such that every arc belongs to a directed
cycle and every directed cycle is at most $k$-colored. Then $D$ has a $k$%
-colored kernel.
\end{conjecture}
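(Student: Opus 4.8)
The plan is to follow exactly the strategy that succeeded in Theorems \ref{thm-mono} and \ref{at-most-k}: combine Remark \ref{closure} with Duchet's Theorem \ref{Duchet}. By Remark \ref{closure} it suffices to show that $\mathfrak{C}_k(D)$ has a kernel, and by Theorem \ref{Duchet} this follows once we prove that every directed cycle of $\mathfrak{C}_k(D)$ has a symmetrical arc. So the whole problem reduces to a statement about reverse paths in $D$, and the two hypotheses (every arc lies on a directed cycle; every directed cycle is at most $k$-colored) are exactly the weakening of the monochromatic condition of Theorem \ref{thm-mono} that we must exploit.

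Accordingly, I would fix a directed cycle $\gamma=(u_0,u_1,\ldots,u_{n-1},u_0)$ of $\mathfrak{C}_k(D)$ and argue by contradiction that it has no symmetrical arc. By definition of the closure each arc $(u_i,u_{i+1})$ lifts to a directed path $T_i:u_i\leadsto_{j_i}u_{i+1}$ in $D$ with $1\le j_i\le k$ (indices mod $n$), while by assumption there is no $u_{i+1}\leadsto_j u_i$ with $1\le j\le k$. Concatenating the $T_i$ yields a closed directed walk $\delta$ in $D$ through $u_0,\ldots,u_{n-1}$, which decomposes (as an arc-multiset) into directed cycles of $D$, each at most $k$-colored by hypothesis (ii); hypothesis (i) would be used to supply short ``return'' subwalks along the cycles through the individual arcs of the $T_i$. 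The aim is to produce, for some index $i$, a directed path $u_{i+1}\leadsto u_i$ in $D$ that is at most $k$-colored, placing $(u_{i+1},u_i)$ in $A(\mathfrak{C}_k(D))$ and contradicting the assumed absence of a symmetrical arc on $\gamma$.

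The main obstacle, and the reason the statement is posed as a conjecture rather than a theorem, is precisely the control of the number of colors on such a return path. Hypothesis (ii) bounds the palette of each single directed cycle, but the reverse walk $T_1\cup\cdots\cup T_{n-1}$ from $u_1$ to $u_0$ is a concatenation of many $\le k$-colored paths and may a priori use far more than $k$ colors. In the monochromatic setting Lemma \ref{colors} forced \emph{colors}$(T_1)=$\emph{colors}$(T_2)$ for opposite paths because every cycle was $1$-colored; under the mere ``$\le k$-colored cycle'' condition there is no comparable collapse, and indeed no clean analogue of Lemma \ref{colors} seems to hold. The crux is therefore a color-economy lemma asserting that, under (i) and (ii), for at least one consecutive pair on $\gamma$ the return path can be kept within $k$ colors.

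To reach such a lemma I would try a minimal-counterexample argument: take $D$ minimizing $|V(D)|+|A(D)|$ among counterexamples, so that $\mathfrak{C}_k(D)$ contains a directed cycle with no symmetrical arc, and analyze a shortest such $\gamma$ together with shortest lifts $T_i$. Local reductions—re-routing $\delta$ through the return arcs guaranteed by (i), and deleting or contracting arcs carried by short or few-colored cycles—should either produce a smaller counterexample or expose a directed cycle of $D$ using more than $k$ colors, contradicting (ii). I expect the genuinely hard point to be arranging these re-routings so that the color bound is respected \emph{simultaneously} on all cycles appearing in the decomposition of $\delta$; this is where an idea beyond the quasi-transitive and local-tournament techniques of the previous sections appears to be needed, which is why I regard the statement as open.
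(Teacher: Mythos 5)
The statement you were asked to prove is stated in the paper as a \emph{conjecture}: the authors give no proof of it, and they pose it precisely because the techniques of Theorems \ref{thm-mono} and \ref{at-most-k} do not extend to this generality. Your proposal correctly identifies the standard reduction (Remark \ref{closure} plus Theorem \ref{Duchet}, so that everything hinges on showing that every directed cycle of $\mathfrak{C}_{k}(D)$ has a symmetrical arc), and it correctly diagnoses where that reduction breaks down. But diagnosing the obstruction is not the same as overcoming it, and your text does not constitute a proof.

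The concrete gap is the ``color-economy lemma'' you yourself flag: you need that for at least one arc $(u_{i},u_{i+1})$ of $\gamma$ there is a return path $u_{i+1}\leadsto_{j}u_{i}$ in $D$ with $j\leq k$. In Theorem \ref{thm-mono} this is supplied by Lemma \ref{colors} (monochromatic cycles force \emph{colors}$(T_{1})=$\emph{colors}$(T_{2})$ for opposite paths), and in Theorem \ref{at-most-k} by Lemmas \ref{local-out} and \ref{local-in} (the local-tournament structure lets one splice the reverse path into a \emph{single} directed cycle, which is at most $k$-colored by hypothesis). Under hypotheses (i) and (ii) alone, the reverse walk $\bigcup_{i=1}^{n-1}T_{i}$ is a concatenation of many $\leq k$-colored pieces and nothing bounds its total palette; the cycle decomposition of the closed walk $\delta$ bounds each constituent cycle but not any path assembled from arcs of several of them. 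Your minimal-counterexample sketch names no mechanism by which the re-routings would force a single $\leq k$-colored cycle through a consecutive pair of $\gamma$, which is exactly the missing idea. So the honest conclusion of your write-up --- that the statement remains open --- is the correct one, and it matches the status the paper assigns to it.
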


\end{document}